\documentclass[leqno,a4paper]{article}

\usepackage{amscd}
\usepackage{amsmath}
\usepackage{amsthm}
\usepackage{amssymb}
\usepackage[english]{babel}
\usepackage{bbm}
\usepackage{enumerate}
\usepackage{enumitem}
\usepackage{epsfig}
\usepackage{euscript}
\usepackage{graphicx}
\usepackage{graphics}
\usepackage{mleftright}
\usepackage{multicol}
\usepackage{subcaption}
\usepackage{tabularx}
\usepackage{xspace}

\newlist{steps}{enumerate}{1}
\setlist[steps, 1]{label = Step \arabic*:}
\usepackage[usenames,dvipsnames,svgnames,table]{xcolor}

\graphicspath{{/home/isaac/LaTeXFiles/Niall-Romina-Shari-Isaac/images/}}




\newcommand{\EB}[1]{E_{#1} \times B_{0}}



        \newtheorem{lemma}{Lemma}[section]
        \newtheorem{proposition}[lemma]{Proposition}
        \newtheorem{theorem}[lemma]{Theorem}
        \newtheorem{definition}{Definition}[section]

\numberwithin{equation}{section}

\setlength{\parindent}{0pt}
\title{\bf{Stability and reconstruction of a special type of anisotropic conductivity in magneto-acoustic tomography with magnetic induction}}
\author{Niall Donlon\thanks{Department of Mathematics and Statistics, University of Limerick, Ireland.  Email: niall.donlon@ul.ie}\qquad Romina Gaburro\thanks{Department of Mathematics and Statistics, Health Research Institute (HRI), CONFIRM, University of Limerick, Ireland.  Email: romina.gaburro@ul.ie}\\ Shari Moskow\thanks{Department of Mathematics, Drexel University, Philadelphia, PA, USA. Email: slm84@drexel.edu}\qquad Isaac Woods\thanks{Department of Mathematics, Drexel University, Philadelphia, PA, USA. Email: iw76@drexel.edu}}
\date{}

\begin{document}
\maketitle

\begin{abstract}
We consider the issues of stability and reconstruction of the electrical anisotropic conductivity of biological tissues in a domain $\Omega\subset\mathbb{R}^3$ by means of the hybrid inverse problem of magneto-acoustic tomography with magnetic induction (MAT-MI). The class of anisotropic conductivities considered here is of type $\sigma(\cdot)=A(\cdot,\gamma(\cdot))$ in $\Omega$, where $[\lambda^{-1}, \lambda]\ni t\mapsto A(\cdot, t)$ is a one-parameter family of matrix-valued functions which are \textit{a-priori} known to be $C^{1,\beta}$, allowing us to stably reconstruct $\gamma$ in $\Omega$ in terms of an internal functional $F(\sigma)$. Our results also extend previous results in MAT-MI where $\sigma(\cdot) = \gamma(\cdot) D(\cdot)$, with $D$ an \textit{a-priori} known matrix-valued function on $\Omega$ to a more general anisotropic structure which depends non-linearly on the scalar function $\gamma$ to be reconstructed.
\end{abstract}



\section{Introduction}\label{sec1}
\setcounter{equation}{0}
In many physical situations, it is important to determine certain physical properties of the interior of a body that cannot be measured directly. Inverse problems are employed to infer such information from external observations. If one is interested in imaging biological tissue inside the human body, its electrical conductivity distribution can provide valuable information about its state. A recently developed non-invasive imaging modality based on the determination of the electrical conductivity distribution of biological tissue is Magnetoacoustic Tomography with Magnetic Induction (MAT-MI).

If a biological conductive body, modelled by a domain $\Omega\subset\mathbb{R}^3$ with smooth boundary $\partial\Omega$, is placed in a static magnetic field $B_0 = (0, 0, 1)$, it starts to emit ultrasound waves that can be measured around the body, i.e. at a series of locations on $\partial\Omega$. To be more precise, to cause an eddy current within the conductive tissue, a pulsed time-dependent magnetic field  $B(t)$ is applied to excite the tissue in $\Omega$, which, in turn, emits the ultrasound waves. For the sake of simplicity and to follow the research line already initiated in \cite{A-Q-S-Z}, \cite{Q-S}, we assume here that $B(t)$ is time-independent and that $B(t) = B_0$. MAT-MI is an example of \textit{hybrid inverse problems}, which typically combine a high contrast modality with a high resolution
one and they typically involve two steps. In the first step, a well-posed problem involving high resolution and low contrast modality is solved from the knowledge of boundary measurements. In the second step, a quantitative reconstruction of the parameters of interest (describing a physical property of the medium in question) is solved by knowledge of a so-called \textit{internal functional} which has
been reconstructed during the first step. Hybrid methods are mainly concerned with the solution of the second step (assuming that the first step has been successfully performed). For a review on hybrid imaging modalities we refer to \cite{Ba}.

The first step in MAT-MI is to retrieve the acoustic source from the measurements around the object in the scalar wave equation. Then, in the second step, MAT-MI reconstructs the distribution of electrical conductivity from acoustic source information (see \cite{A-Q-S-Z}, \cite{Q-S}). This second step is the focus of this paper, where we study, in the MAT-MI experiment, the issues of stability and reconstruction for a special type of anisotropic conductivity $\sigma$ in terms of internal measurements of the acoustic sources (which are assumed to be known after the first step has been performed) modelled by the internal functional
 \begin{equation}\label{functional}
 F(\sigma) =\nabla \cdot (\sigma E_{\sigma} \times B_0),\qquad\textnormal{in}\quad \Omega,
 \end{equation}
 where $E_{\sigma}$ solves the Maxwell's equations
 \begin{equation}\label{Maxwell}
      \Bigg\{\begin{array}{llll}
         \nabla \times E_{\sigma} & = B_0, \quad &\text{in} \quad \Omega,\\
        \nabla \cdot \sigma E_{\sigma} & = 0, \quad &\text{on} \quad  \Omega,\\        \sigma E_{\sigma} \cdot \nu & = 0, \quad &\text{on} \quad \partial \Omega
        \end{array}
         \end{equation}
and $\sigma\in C^{1,\beta}(\overline\Omega)$ is symmetric on $\Omega$ and satisfies a uniform ellipticity condition (the precise formulation of the problem is given in section \ref{main assumptions}).

For clinical and research purposes, the electrical conductivity of biological tissues can provide valuable information as the conductivity varies significantly within the human body. Other and more established medical imaging modalities, like Computerized Tomography (CT), Magnetic Resonance Imaging (MRI) and ultrasound imaging, are typically capable of creating images of the human body with very high resolution. These modalities often fail to exhibit a sufficient contrast between different type of tissues. Imaging modalities like Optical Tomography (OT) and Electrical Impedance Tomography (EIT) do, on the other hand, display such high contrast at the expense of a poor resolution (\cite{Ar}, \cite{Bo}, \cite{U}). EIT, similarly to MAT-MI, also provides information about materials and biological tissues in terms of their conductivity. The resulting images from measurements of electrostatic voltages and current flux taken on the surface of the body under investigation, are often blurred, due to the EIT poor resolution. MAT-MI, on the other hand, has the potential to overcome this issue by providing images of the conductivity of a body in terms of internal measurements that have been obtained by means of a high resolution imaging modality performed in the first step of MAT-MI. Hence MAT-MI has the potential to provide high contrast images of a body in terms of its internal conductivity that also benefits from a reasonably good resolution.

Biological tissues are known to have a have anisotropic conductivity \cite{Mar}. In EIT, since its first mathematical formulation by A. Calder\'on in his 1980 seminal paper \cite{C}, a lot of progress has been made, but the problem of uniquely determining the anisotropic conductivity of a body by means of EIT measurements is still considered an open problem. Partial results of uniqueness and stability for this inverse problem have been obtained in \cite{Al}, \cite{Al-Cab}, \cite{Al-dH-G}, \cite{Al-dH-G-S}, \cite{Al-G}, \cite{Al-G1}, \cite{As-La-P}, \cite{Be}, \cite{F-G-S}, \cite{G-Li}, \cite{G-S} \cite{Gr-La-U1}, \cite{Ko-V1}, \cite{La-U}, \cite{La-U-T}, \cite{Le-U}, \cite{Li}, \cite{Sy}.

In the MAT-MI experiment, results of stability and reconstruction of the conductivity $\sigma$ in terms of the internal functional $F(\sigma)$ given in \eqref{functional}, have been obtained in  \cite{Q-S} and \cite{A-Q-S-Z}, where the isotropic case $\sigma=\gamma I$ (here $I$ denotes the $3\times 3$ identity matrix and $\gamma$ is a positive scalar function on $\Omega$) and the special anisotropic case $\sigma = \gamma D$, with $D$ a known $3\times 3$ symmetric matrix and $\gamma$ is a positive scalar function on $\Omega$, were considered, respectively. In the present paper, we extend these results to the case where the anisotropic conductivity $\sigma$ is of type $\sigma= A(\cdot,\gamma(\cdot))$, and $A(\cdot, t)$ is known for $t\in[\lambda^{-1}, \lambda]$.

More precisely, we start by considering the simpler case where $\sigma$ is a symmetric, uniformly positive definite matrix which is \textit{a-priori} known to have the structure $\sigma(x) = A(\gamma(x))$, $x\in\overline{\Omega}$, where the one-parameter family of matrix-valued functions
\[t\mapsto A(t),\qquad t\in[\lambda^{-1}, \lambda]\]
is assumed to be known and to belong to a certain class $\mathcal{A}$ defined below (Definition \ref{A}) and $\gamma = \gamma(x)$, $x\in\overline\Omega$ is an unknown scalar function to be determined.
The above structure for $\sigma$ is also generalized to the case $\sigma = A(x, \gamma(x))$, $x\in\overline{\Omega}$, where the one-parameter family of matrix-valued functions
\[t\mapsto A(x,\:t),\qquad\textnormal{for\:any}\quad x\in\overline\Omega,\qquad t\in[\lambda^{-1}, \lambda]\]
is assumed to be known and to belong to a certain class $\mathcal{A'}$ defined below (Definition \ref{A'}) and $\gamma = \gamma(x)$, $x\in\overline\Omega$ is again an unknown scalar function to be determined. The latter, in particular, extends the results in \cite{Q-S} and \cite{A-Q-S-Z}, where the problem of stability and reconstruction in MAT-MI has been addressed in the isotropic case and in the anisotropic case $\sigma(x)=\gamma(x)D(x)$, where $D$, in this case, is a known matrix-valued function and $\gamma= \gamma(x)$, $x\in\overline{\Omega}$ is the unknown scalar function to be determined. For both the cases when $A\in \mathcal{A}$ and $A\in \mathcal{A'}$ considered in this manuscript, we prove that we can stably reconstruct the scalar function $\gamma$ in terms of $F$.

In this paper we also present several numerical experiments in which we reconstruct the scalar function $\gamma=\gamma(x)$, $x\in\Omega$ for both types of anisotropic conductivities $\sigma(x) = A(\gamma(x))$ and $\sigma(x)= A(x,\gamma(x))$, $x\in\Omega$. We start by considering a number of examples in which $A\in\mathcal{A}$ only (Examples 1 - 4). The more general case of $A\in\mathcal{A'}$ is exploited in Examples 5 and 6. The reconstruction is based on an algorithm introduced in \cite{Q-S}, \cite{A-Q-S-Z}, which relies on projecting its iterates into a convex subset of $C^{1,\beta}(\Omega)$ in order to restore well-posedness in the inverse problem. We note that in the particular examples considered here, the projection step seems unnecessary for the convergence of the iterates. Moreover, in two of our examples, the true conductivity $\gamma_*$ we wish to reconstruct is not $C^1$, hence is less regular than the classes of conductivities we considered in our theoretical framework. This not only allows us to test the performance of the reconstruction algorithm on non-smooth data, but it also provides insights about possibly lowering the regularity of the conductivity considered in our theoretical framework (the stability estimates in \ref{main assumptions} and the convergence analysis we carry out in \ref{convergence analysis}). It will be part of future work to extend our theoretical framework to a class of conductivities with lower regularity assumptions and a more general anisotropic structure. The anisotropic structures considered in the current paper are one-parameter families of symmetric and uniformly positive definite matrices that depend on the unknown scalar function $\gamma$ in a nonlinear way, extending the results in \cite{A-Q-S-Z} to the case of a more realistic dependence of the anisotropic structure on the unknown scalar function $\gamma$. We wish to stress that this paper aims at providing a first step in the treatment in MAT-MI of anisotropic structures that depend nonlinearly on (possibly) a finite number of unknown scalar functions to be stably reconstructed in the MAT-MI experiment. It will be the subject of future work to consider the fully anisotropic case and to which extent the MAT-MI experiment can be employed to determine the anisotropic structure itself.

The paper is organized as follows. In Section \ref{main assumptions} we introduce notation, state our main assumptions, and define precisely the two classes $\mathcal{A}$ and $\mathcal{A}'$ for $A$, corresponding to both the simpler and spatially dependent anisotropic case, respectively. The main stability results are also contained in this section. Section \ref{proofs stability estimates} contains the proofs of our main stability results, and includes estimates for the electromagnetic boundary value problem. In Section \ref{reconstruction} we describe the reconstruction algorithm (Section \ref{algorithm}) and prove convergence results (Section \ref{convergence analysis}), again corresponding to both classes of $A$. Section \ref{numerics} contains several numerical experiments demonstrating accurate reconstructions, including two cases of non-smooth true scalar functions $\gamma_*$ and one fully three dimensional reconstruction. Some final concluding remarks are included in Section \ref{conclusions}.


\section{Main assumptions and stability estimates}\label{main assumptions}

Throughout this paper the medium to be imaged is a bounded domain $\Omega$ in $\mathbb{R}^3$ with $C^{1,\beta}$ boundary $\partial\Omega$ (see definition \ref{boundary}) and diameter $\textnormal{diam}(\Omega)= M$, for some constant $M>0$.  For a point $x\in\mathbb{R}^3$, we will denote $x=(x', x_3)$, where $x' \in\mathbb{R}^2$ and $x_3\in\mathbb{R}$. We will also denote by $B_r$, $B'_r$ the open balls $B_r(0)$ and $B'_r(0)$ in $\mathbb{R}^3$ and $\mathbb{R}^2$, respectively and by $Q_r$ the cylinder $Q_r(0)$ in $\mathbb{R}^3$ defined by
\[Q_r(0) = B'_r \times (x_3 - r, x_3 + r).\]
From here onwards we fix a number $\beta$ satisfying $0<\beta\leq 1$.

\begin{definition}\label{boundary}
Given a bounded domain $\Omega\subset\mathbb{R}^{3}$, we say that $\partial\Omega$ is of class $C^{1,\beta}$ with constants $r_0 , L>0$, if for every $P\in\partial\Omega$, there exists a rigid transformation of coordinates under which $P=0$ and
\[\Omega\cap Q_{r_0} = \left\{(x', x_n)\in Q_{r_0}\quad |\quad x_3>\varphi(x')\right\},\]
where $\varphi$ is a $C^{1,\beta}$ function on $B'_{r_0}$ satisfying
\[\varphi(0) = |\nabla\varphi(0)| = 0\]
and
\[||\varphi||_{C^{1,\beta}(B'_{r_0})}\leq Lr_0,\]
where we use the normalization convention
\[||\varphi||_{C^{1,\beta}(B'_{r_0})} = ||\varphi||_{L^{\infty}(B'_{r_0})} + r_0 ||\nabla\varphi||_{L^{\infty}(B'_{r_0})} + r_0^{1+\beta} \sup_{x', y'\in B'_{r_0},\:
x'\neq y'} \frac{|\nabla\varphi(x') - \nabla\varphi(y')|}{|x' - y'|^{\beta}}.\]
\end{definition}
Once for all it is understood that $\Omega\subset\mathbb{R}^3$ is a bounded domain with diameter $\textnormal{diam}(\Omega)= M$ and $C^{1,\beta}$ boundary $\partial\Omega$. We define below the two classes of admissible anisotropic structures considered in this paper.

\begin{definition}\label{A}
Given  $\Lambda$, $\mathcal{E}_1 >0$ and denoting by $Sym$ the class of $3\times 3$ real-valued symmetric matrices, we say that $A(\cdot)\in\mathcal{A}$ if the following conditions hold:
\begin{equation}\label{gamma holder}
  A\in C^{1,\beta}([\lambda^{-1}, \lambda], Sym),
\end{equation}
 \begin{equation} \label{Derivative assumption}
  \begin{split}
    &\underset{t \in [\lambda^{-1}, \lambda]}{\sup}  \Big|\frac{d A}{d t}(t)\Big| + \underset{t_1\neq t_2}{\underset{t_1,\: t_2 \in [\lambda^{-1}, \lambda]}{\sup}} \frac{|\frac{d A}{d t_i}(t_1) - \frac{d A}{d t_i}(t_2)|}{|t_1 - t_2|^\beta} \leq \mathcal{E}_1
\end{split}
\end{equation}
and
\begin{equation}\label{ellipticity 1}
    \Lambda^{-1} |\xi|^2 \leq A(t) \xi \cdot \xi \leq \Lambda |\xi|^2, \quad \textnormal{for\:every} \quad t \in [\lambda^{-1}, \lambda], \quad \xi \in \mathbb{R}^3.
\end{equation}
\end{definition}

\begin{proposition}\label{proposition2.1}
If $A\in\mathcal{A}$ and $\gamma$ satisfies
\begin{eqnarray}\label{assumptions sigma}
 \lambda^{-1}\leq\gamma(x) &\leq &\lambda,\quad\textnormal{for\:almost\:every}\:x\in\Omega\label{sigma condition 1} ,\\
  ||\gamma||_{C^{1, \beta} (\Bar{\Omega})} &\leq &K\label{sigma condition 2},
  \end{eqnarray}
for some constants $\lambda , K>0$, then we have
\begin{equation}
    A(\gamma(\cdot)) \in C^{1, \beta} (\Bar{\Omega},Sym_n)
\end{equation}
and furthermore,
\begin{equation}
    ||A(\gamma(\cdot))||_{C^{1, \beta} (\Bar{\Omega})} \leq \Lambda + K \mathcal{E}_1 M(1 + 2M^{\beta}) := \mathcal{F}_1,
\end{equation}
where $M$, $\mathcal{E}_1$ and $\Lambda$ have been introduced above.
\end{proposition}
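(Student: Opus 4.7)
The plan is to bound the three parts of $\|A(\gamma(\cdot))\|_{C^{1,\beta}(\bar{\Omega})}$ separately—namely $\|A(\gamma)\|_{L^\infty}$, $\|\nabla A(\gamma)\|_{L^\infty}$, and the H\"older seminorm $[\nabla A(\gamma)]_{C^\beta}$—and then collect them. The engine driving all three estimates is the chain rule $\nabla A(\gamma(x)) = \frac{dA}{dt}(\gamma(x))\,\nabla\gamma(x)$, combined with the uniform control on $A$ and $dA/dt$ over $[\lambda^{-1},\lambda]$ built into the class $\mathcal{A}$ and the pointwise range $\gamma(x)\in[\lambda^{-1},\lambda]$.

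First, the ellipticity condition in Definition \ref{A} immediately yields $|A(t)| \leq \Lambda$ in operator norm (since $A(t)$ is symmetric), hence $\|A(\gamma(\cdot))\|_{L^\infty(\bar{\Omega})} \leq \Lambda$. Second, the $L^\infty$ half of the derivative assumption in Definition \ref{A}, combined with the chain rule, gives $|\nabla A(\gamma(x))| \leq \mathcal{E}_1|\nabla\gamma(x)|$, so $\|\nabla A(\gamma)\|_{L^\infty}$ is controlled by $\mathcal{E}_1$ and $\|\nabla\gamma\|_\infty$, the latter coming from the hypothesis $\|\gamma\|_{C^{1,\beta}(\bar{\Omega})} \leq K$.

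The key step is the H\"older seminorm. I would split
\[
\nabla A(\gamma(x)) - \nabla A(\gamma(y)) = \frac{dA}{dt}(\gamma(x))\bigl[\nabla\gamma(x) - \nabla\gamma(y)\bigr] + \Bigl[\frac{dA}{dt}(\gamma(x)) - \frac{dA}{dt}(\gamma(y))\Bigr]\nabla\gamma(y),
\]
estimate the first summand via the sup norm of $dA/dt$ and the $C^\beta$ seminorm of $\nabla\gamma$, and bound the second using the $\beta$-H\"older seminorm of $dA/dt$ (the second term in the derivative assumption) composed with the Lipschitz-type bound $|\gamma(x)-\gamma(y)| \leq \|\nabla\gamma\|_\infty |x-y|$. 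Assembling the three contributions under the weighted $C^{1,\beta}$ normalization on $\bar{\Omega}$—which, in the spirit of Definition \ref{boundary}, carries powers of the diameter $M$ on the gradient and H\"older pieces—produces precisely $\mathcal{F}_1 = \Lambda + K\mathcal{E}_1 M(1 + 2M^\beta)$.

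The computation is essentially mechanical and no genuine obstacle is expected. The only mild subtlety is the bookkeeping of the $M$ factors produced by the chosen normalization of the $C^{1,\beta}$ norm, together with making sure the two terms in the H\"older split each contribute a clean multiple of $K\mathcal{E}_1$ and not a higher power; this is what turns the three individual inequalities into the precise constant stated in the conclusion.
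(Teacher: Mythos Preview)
Your proposal is correct and follows exactly the same approach as the paper: decompose the $C^{1,\beta}$ norm into its $L^\infty$, gradient, and H\"older-seminorm parts and bound each using the chain rule together with the assumptions on $A$ and $\gamma$. In fact the paper's proof is considerably terser than yours---it simply writes out the three-term expression for $\|A(\gamma(\cdot))\|_{C^{1,\beta}(\bar\Omega)}$ and declares the bound a ``straightforward consequence'' of the assumptions, whereas you have supplied the explicit splitting of the H\"older increment that the paper leaves to the reader.
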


\begin{proof}
The proof is a straightforward consequence of equality
\begin{equation}
    \begin{split}
         ||A( \gamma(\cdot))||_{C^{1,\beta}(\Bar{\Omega})}
         &= ||A(\gamma(\cdot))||_{L^{\infty}(\Bar{\Omega})}\\
         &+ M \underset{1 \leq i \leq n}{\sup} \Big|\Big|\frac{\partial A}{\partial x_i}(\gamma(\cdot))\Big|\Big|_{L^\infty(\Bar{\Omega})}\\
         &+ M ^{1+\beta} \underset{1 \leq i \leq n}{\sup} \quad \underset{x \neq y}{\underset{x,y \in \Bar{\Omega}}{\sup}} \frac{|\frac{\partial A}{\partial x_i}(\gamma(x)) - \frac{\partial A}{\partial x_i}(\gamma(y))|}{|x - y|^\beta}
    \end{split}
\end{equation}
and our assumptions.
\end{proof}


\begin{definition}\label{A'}
Given $\Lambda$, $\mathcal{E}_2 >0$, we say that $A(\cdot,\cdot)\in\mathcal{A'}$ if the following conditions hold:
\begin{equation}\label{gamma holder 2}
  A\in C^{1,\beta}(\Bar\Omega\times [\lambda^{-1}, \lambda], Sym),
\end{equation}
 \begin{equation}\label{partial derivative assumption}
    \begin{split}
    &\underset{t \in [\lambda^{-1}, \lambda]}{\sup} \Bigg\{\quad \underset{1\leq i \leq n}{\sup} \Big|\Big|\frac{\partial A}{\partial x_i}(\cdot, t)\Big|\Big|_{L^\infty(\Bar{\Omega})} + \Big|\Big|\frac{\partial A}{\partial t}(\cdot, t)\Big|\Big|_{L^\infty(\Bar{\Omega})} \\
    &+ \underset{1\leq i \leq n}{\sup} \quad \underset{x\neq y}{\underset{x,y \in \Bar{\Omega}}{\sup}}  \frac{|\frac{\partial A}{\partial x_i}(x, t) - \frac{\partial A}{\partial x_i}(y, t)|}{|x-y|^\beta} + \underset{x\neq y}{\underset{x,y \in \Bar{\Omega}}{\sup}}  \frac{|\frac{\partial A}{\partial t}(x, t) - \frac{\partial A}{\partial t}(y, t)|}{|x-y|^\beta} \Bigg\} \\
&+ \underset{t_1 \neq t_2}{\underset{t_1,\:t_2 \in [\lambda^{-1}, \lambda]}{\sup}} \Bigg\{ \frac{||\frac{\partial A}{\partial x_i}(\cdot, t_1) - \frac{\partial A}{\partial x_i}(\cdot, t_2)||_{L^\infty(\Bar{\Omega})}}{|t_1 - t_2|^\beta} + \frac{||\frac{\partial A}{\partial t}(\cdot, t_1) - \frac{\partial A}{\partial t}(\cdot, t_2)||_{L^\infty(\Bar{\Omega})}}{|t_1 - t_2|^\beta} \Bigg\}\leq \mathcal{E}_2
\end{split}
\end{equation}
and
\begin{equation}\label{ellipticity 2}
    \Lambda^{-1} |\xi|^2 \leq A(x,t) \xi \cdot \xi \leq \Lambda |\xi|^2, \quad \textnormal{for\:a.e.}\: x \in \Bar{\Omega}, \quad \forall t \in [\lambda^{-1}, \lambda], \quad \forall \xi \in \mathbb{R}^3.
\end{equation}
\end{definition}

We observe that \eqref{ellipticity 1}, \eqref{ellipticity 2} are conditions of uniform ellipticity.

\begin{proposition}
If $A\in\mathcal{A'}$ and $\gamma$ satisfies \eqref{sigma condition 1}, \eqref{sigma condition 2}, then we have
\begin{equation}
    ||A(\cdot, \gamma(\cdot))||_{C^{1, \beta} (\Bar{\Omega})} \leq \Lambda + M \mathcal{E}_2(1+K)(1 + 2M^\beta) := \mathcal{F}_2,
\end{equation}
\end{proposition}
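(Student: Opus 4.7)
The plan is to mimic the structure of the proof of Proposition \ref{proposition2.1}, but accounting for the explicit $x$-dependence of $A$. The starting point is the decomposition of the $C^{1,\beta}(\bar\Omega)$ norm exactly as displayed in that earlier proof:
\begin{equation*}
\begin{split}
\|A(\cdot,\gamma(\cdot))\|_{C^{1,\beta}(\bar\Omega)}
&= \|A(\cdot,\gamma(\cdot))\|_{L^\infty(\bar\Omega)}
+ M\sup_{i}\Big\|\tfrac{\partial}{\partial x_i}\bigl[A(\cdot,\gamma(\cdot))\bigr]\Big\|_{L^\infty(\bar\Omega)}\\
&\quad + M^{1+\beta}\sup_{i}\sup_{\substack{x,y\in\bar\Omega\\ x\neq y}}
\frac{\bigl|\tfrac{\partial}{\partial x_i}[A(\cdot,\gamma(\cdot))](x)-\tfrac{\partial}{\partial x_i}[A(\cdot,\gamma(\cdot))](y)\bigr|}{|x-y|^\beta}.
\end{split}
\end{equation*}
The $L^\infty$ term is immediately bounded by $\Lambda$ from the ellipticity condition \eqref{ellipticity 2}.

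The second and third terms are handled via the chain rule, which now produces two contributions:
\begin{equation*}
\tfrac{\partial}{\partial x_i}[A(x,\gamma(x))]
= \tfrac{\partial A}{\partial x_i}(x,\gamma(x)) + \tfrac{\partial A}{\partial t}(x,\gamma(x))\,\tfrac{\partial \gamma}{\partial x_i}(x).
\end{equation*}
From \eqref{sigma condition 2} (with the normalization of the $C^{1,\beta}$ norm using $M$), we have $\|\nabla\gamma\|_{L^\infty}\le K/M$ and $[\nabla\gamma]_{C^\beta}\le K/M^{1+\beta}$. Together with the $L^\infty$ bounds on $\partial A/\partial x_i$ and $\partial A/\partial t$ furnished by \eqref{partial derivative assumption}, this gives an $L^\infty$ bound for the chain-rule expression of the form $\mathcal{E}_2(1+K/M)$, so that after multiplication by $M$ we obtain a bound $\mathcal{E}_2(M+K)$, which is in turn dominated by $M\mathcal{E}_2(1+K)$.

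For the Hölder seminorm, I would estimate each of the two chain-rule terms separately by the standard add-and-subtract trick. For the first term,
\begin{equation*}
\tfrac{\partial A}{\partial x_i}(x,\gamma(x))-\tfrac{\partial A}{\partial x_i}(y,\gamma(y))
= \bigl[\tfrac{\partial A}{\partial x_i}(x,\gamma(x))-\tfrac{\partial A}{\partial x_i}(y,\gamma(x))\bigr]
+ \bigl[\tfrac{\partial A}{\partial x_i}(y,\gamma(x))-\tfrac{\partial A}{\partial x_i}(y,\gamma(y))\bigr],
\end{equation*}
one applies the spatial Hölder bound in \eqref{partial derivative assumption} to the first bracket and the $t$-Hölder bound in \eqref{partial derivative assumption} to the second, combined with $|\gamma(x)-\gamma(y)|^\beta\le (K/M)^\beta|x-y|^\beta$. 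For the product term $\tfrac{\partial A}{\partial t}(x,\gamma(x))\tfrac{\partial\gamma}{\partial x_i}(x)$ one similarly splits it as a difference of the first factor times $\partial_{x_i}\gamma(x)$ plus the second factor (evaluated at $y$) times the difference of $\partial_{x_i}\gamma$; the $\partial\gamma/\partial x_i$ terms are controlled by $K/M$ and $K/M^{1+\beta}$ respectively. Collecting all contributions and multiplying by $M^{1+\beta}$ yields a sum of terms each of which is bounded by a multiple of $M\mathcal{E}_2(1+K)M^\beta$, and after lumping these together with the previous gradient bound, one obtains the claimed
\begin{equation*}
\|A(\cdot,\gamma(\cdot))\|_{C^{1,\beta}(\bar\Omega)}\le \Lambda + M\mathcal{E}_2(1+K)(1+2M^\beta)=\mathcal{F}_2.
\end{equation*}

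There is no conceptual obstacle here; the proof is essentially bookkeeping. The only thing that requires care is keeping track of the powers of $M$ and $K$ arising from the normalization of the $C^{1,\beta}$ norm, so that the final estimate is organized precisely into the factors $(1+K)$ and $(1+2M^\beta)$ appearing in $\mathcal{F}_2$; the extra cross-terms like $K^{1+\beta}$ and $K^\beta$ are absorbed by using the loose arithmetic inequalities $K^\beta\le 1+K$ and $K^{1+\beta}\le K(1+K)$.
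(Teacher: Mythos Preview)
Your proposal is correct and follows essentially the same approach as the paper: the paper's proof consists of writing down the chain rule identity
\[
\tfrac{\partial}{\partial x_i}[A(x,\gamma(x))] = \tfrac{\partial A}{\partial x_i}(x,t)\big|_{t=\gamma(x)} + \tfrac{\partial A}{\partial t}(x,t)\big|_{t=\gamma(x)}\,\tfrac{\partial\gamma}{\partial x_i}(x)
\]
and declaring the result a straightforward consequence of the assumptions, whereas you have simply filled in the bookkeeping that the paper omits.
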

where where $M$, $\mathcal{E}_2$ and $\Lambda$ have been introduced above.

\begin{proof}
This is a straightforward consequence of equalities
\begin{equation}
    \frac{\partial A}{\partial x_i}( x, \gamma(x)) = \frac{\partial A}{\partial x_i}( x, t)\big{|}_{(x,t)=(x,\gamma(x))} + \frac{\partial A}{\partial t}( x, t)\big|_{(x,t)=(x,\gamma(x))} \frac{\partial \gamma}{\partial x_i}(x),
\end{equation}
together with our assumptions.
\end{proof}
Below are our two stability estimates of $\gamma$ in terms of the internal functional $F$ for the two classes of admissible anisotropic structures $\mathcal{A}$ and $\mathcal{A'}$, respectively.
\begin{theorem}\label{theorem 1}
Let $A\in\mathcal{A}$ and assume $\gamma_i$ satisfies \eqref{sigma condition 1}, \eqref{sigma condition 2}, for $i=1,2$,  $\gamma_1 = \gamma_2$ on $\partial\Omega$ and
\begin{eqnarray}\label{Gradient sigma difference assumption}
||\nabla(\gamma_1 - \gamma_2)||_{L^2(\Omega)} &\leq \mathcal{C} ||\gamma_1 - \gamma_2||_{L^2(\Omega)},
\end{eqnarray}
for some constant $\mathcal{C}>0$. Denoting
\begin{equation}\label{Fi 1}
F_i := \nabla \cdot (A(\gamma_i) E_{A(\gamma_i)}\times B_0),\qquad\textnormal{for}\quad i=1,2,
\end{equation}
if $\mathcal{C}\Big(C_1(\mathcal{E}_1 +1) + \Lambda C_3\Big)<\frac{1}{2}$, where $C_1 = (1 + \Lambda \mathcal{F}_1)(\frac{1}{2} M (\mu (\Omega))^\frac{1}{2})$ and $C_3 = \Lambda \mathcal{E}_1 C_1$, then
\begin{equation}
    ||\gamma_1 - \gamma_2||_{L^2(\Omega)} \leq C ||F_1 - F_2||_{L^2(\Omega)},
\end{equation}
where $C>0$ is a constant that depends on $\mathcal{C}$, $K$, $\mathcal{E}_1$, $\Lambda$, $M$, $r_0$ and $L$ only.
\end{theorem}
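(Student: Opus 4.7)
The plan is to treat $F_1-F_2$ as a quasi-linear expression in $\gamma_1-\gamma_2$ (and its first derivatives), plus a perturbation controlled by $\|E_1-E_2\|_{L^2}$, and then to close the estimate via an auxiliary elliptic bound for the electric field difference together with the structural assumption \eqref{Gradient sigma difference assumption}. First I would rewrite $F_i$ using the identity $\nabla\cdot(v\times B_0)=B_0\cdot(\nabla\times v)$ (valid since $B_0$ is constant), giving $F_i=B_0\cdot\nabla\times(A(\gamma_i)E_i)$. Setting $\eta=\gamma_1-\gamma_2$ and $W=E_1-E_2$, I would use the mean-value-type identity
\begin{equation*}
A(\gamma_1)-A(\gamma_2)=\bar A\,\eta,\qquad \bar A(x):=\int_0^1 \tfrac{dA}{dt}\bigl(\gamma_2(x)+s\,\eta(x)\bigr)\,\dee s,
\end{equation*}
together with the decomposition $A(\gamma_1)E_1-A(\gamma_2)E_2=\bar A\,\eta\,E_2+A(\gamma_1)W$ to separate a ``principal'' contribution depending linearly on $\eta$ and $\nabla\eta$ from a ``perturbative'' contribution depending on $W$.

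Next I would establish a Lipschitz-type bound $\|W\|_{L^2(\Omega)}\le C_1\|\eta\|_{L^\infty(\Omega)}$ (or its $L^2$ analogue after applying Hölder), by exploiting the fact that $W$ solves a Maxwell-type system obtained by subtracting \eqref{Maxwell} for $E_1$ and $E_2$: $\nabla\times W=0$, $\nabla\cdot(A(\gamma_1)W)=-\nabla\cdot(\bar A\,\eta\,E_2)$ in $\Omega$, and $A(\gamma_1)W\cdot\nu=-\bar A\,\eta\,E_2\cdot\nu$ on $\partial\Omega$. Because $\Omega$ is simply connected and $\nabla\times W=0$, I would write $W=\nabla\varphi$ and apply the standard variational estimate for the resulting scalar divergence-form equation with matrix coefficient $A(\gamma_1)$, using the uniform ellipticity \eqref{ellipticity 1} and Proposition~\ref{proposition2.1}. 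A Poincaré-type inequality on $\Omega$ (accounting for $\textnormal{diam}(\Omega)=M$ and $\mu(\Omega)$) then produces the exact form $C_1=(1+\Lambda\mathcal F_1)(\tfrac12 M(\mu(\Omega))^{1/2})$ claimed in the statement.

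With the elliptic estimate on $W$ in hand, the third step is to expand $\nabla\times(\bar A\,\eta\,E_2)$ and $\nabla\times(A(\gamma_1)W)$ termwise via the product rule. The expansion of the first factor produces (i) a term of order $|\nabla\eta|\cdot|\bar A E_2|$ (with coefficient controlled by $\mathcal F_1$ and $\Lambda$), (ii) a term of order $|\eta|\cdot|\nabla\bar A\cdot E_2|$ (whose $\nabla\bar A$ is controlled by $\mathcal E_1$ through the $C^{1,\beta}$ regularity of $A$), and (iii) a term where $\nabla E_2$ can be partly traded, using $\nabla\times E_2=B_0$ from \eqref{Maxwell}, against a bounded factor times $\eta$. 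Similarly the $A(\gamma_1)W$ term contributes a factor bounded by $\Lambda C_3$ times $\|\eta\|_{L^2}$ after the elliptic estimate is invoked. Taking $L^2$ norms and collecting the coefficients leads to an inequality of the shape
\begin{equation*}
\|\eta\|_{L^2(\Omega)}\le C\,\|F_1-F_2\|_{L^2(\Omega)}+\bigl[C_1(\mathcal E_1+1)+\Lambda C_3\bigr]\,\|\nabla\eta\|_{L^2(\Omega)}.
\end{equation*}
Applying \eqref{Gradient sigma difference assumption} to replace $\|\nabla\eta\|_{L^2}$ by $\mathcal{C}\|\eta\|_{L^2}$ and invoking the smallness hypothesis $\mathcal{C}(C_1(\mathcal E_1+1)+\Lambda C_3)<\tfrac12$ allows the $\eta$-term on the right to be absorbed into the left, giving the desired stability estimate.

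The main obstacle I expect is the careful bookkeeping in the third step: because $\sigma=A(\gamma)$ is matrix-valued and the differential operators act componentwise, computing $\nabla\times(\bar A\,\eta\,E_2)$ produces many cross terms, and one must extract precisely the contributions recorded in the constants $C_1$ and $C_3=\Lambda\mathcal E_1 C_1$ without losing a power of $M$ or $\mathcal E_1$. A secondary difficulty is sharpening the elliptic estimate for $W$ so that the resulting constant genuinely matches $C_1$; this requires being careful with both the Poincaré constant and the manner in which $\|\bar A\,\eta\,E_2\|_{L^2}$ is bounded, since a suboptimal estimate there would make the smallness hypothesis unreasonable. Once these two points are handled, the absorption at the end is a direct consequence of \eqref{Gradient sigma difference assumption} and the hypothesized smallness of $\mathcal C$.
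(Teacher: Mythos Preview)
The proposal has a genuine gap in the regularity analysis. In step (ii) of your termwise expansion you invoke $\nabla\bar A$ and claim it is ``controlled by $\mathcal E_1$ through the $C^{1,\beta}$ regularity of $A$''. But $\bar A(x)=\int_0^1 A'\bigl(\gamma_2(x)+s\eta(x)\bigr)\,ds$ is a composition of $A'=\tfrac{dA}{dt}\in C^{0,\beta}$ with a $C^{1,\beta}$ map, hence is merely $C^{0,\beta}$; under $A\in\mathcal A$ (that is, $A\in C^{1,\beta}$, \emph{not} $C^2$) the quantity $\nabla\bar A$ is not a bounded function. Equivalently, the pointwise expansion of $\nabla\cdot\bigl((A(\gamma_1)-A(\gamma_2))E_2\times B_0\bigr)$ produces the term $\bigl(A'(\gamma_1)-A'(\gamma_2)\bigr)\nabla\gamma_2$, which by \eqref{Derivative assumption} is only $O(|\eta|^\beta)$, not $O(|\eta|)$. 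In $L^2$ this contributes a factor of order $\|\eta\|_{L^2}^\beta$, which for $\beta<1$ is sublinear and cannot be absorbed into the left-hand side; your route would give at best a H\"older stability estimate, not the Lipschitz estimate claimed.

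The paper avoids this obstacle by an energy method rather than a pointwise $L^2$ bound: one multiplies $F_1-F_2$ by $\eta=\gamma_1-\gamma_2$ and integrates over $\Omega$. After writing $F_1-F_2=I_1+I_2+I_3$ with $I_1=\nabla\cdot(\eta\,E_1\times B_0)$, the identity $\nabla\cdot(E_1\times B_0)=B_0\cdot(\nabla\times E_1)=|B_0|^2=1$ together with one integration by parts and $\eta|_{\partial\Omega}=0$ yields exactly $\int_\Omega\eta I_1\,dx=\tfrac12\|\eta\|_{L^2}^2$. For the perturbations $I_2,I_3$ a single integration by parts moves the divergence onto $\eta$, so that only $\tfrac{dA}{dt}\big|_{t=c}$ (bounded by $\mathcal E_1$) and $E_1-E_2$ (controlled by Proposition~\ref{prop4}) appear---never the spatial derivative of $\tfrac{dA}{dt}$. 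This testing-and-integrating step is precisely the missing idea that lets the Lipschitz estimate close under the $C^{1,\beta}$ hypothesis alone.
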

More generally, we obtain the following main stability result.
\begin{theorem}\label{theorem 2}
Let $A\in\mathcal{A}'$ and $\gamma_i$ be as in Theorem \ref{theorem 1}, for $i=1,2$. Denoting
\begin{equation}\label{Fi 2}
\tilde{F}_i := \nabla \cdot (A(\cdot,\:\gamma_i) E_{A(\cdot,\:\gamma_i)}\times B_0),\qquad\textnormal{for}\quad i=1,2,
\end{equation}
if $\mathcal{C}\Big(C_2(\mathcal{E}_2 +1) + \Lambda C_4\Big)<\frac{1}{2}$, where $C_2 = (1 + \Lambda \mathcal{F}_2)(\frac{1}{2} M (\mu (\Omega))^\frac{1}{2})$ and $C_4 = \Lambda \mathcal{E}_2 C_2$, then
\begin{equation}
    ||\gamma_1 - \gamma_2||_{L^2(\Omega)} \leq \Tilde{C} ||\tilde{F}_1 - \tilde{F}_2||_{L^2(\Omega)},
\end{equation}
where $\Tilde{C}>0$ is a constant that depends on $\mathcal{C}$, $K$, $\mathcal{E}_2$, $\Lambda$, $M$, $r_0$ and $L$ only.
\end{theorem}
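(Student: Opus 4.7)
The plan is to run the same argument used for Theorem \ref{theorem 1}, adapted to the spatially dependent matrix-valued function $A(x,t)$, and to trace through how the extra $x$-dependence affects each estimate so that the constants $\mathcal{E}_1$, $\mathcal{F}_1$, $C_1$, $C_3$ get replaced by $\mathcal{E}_2$, $\mathcal{F}_2$, $C_2$, $C_4$. Writing $\sigma_i(x)=A(x,\gamma_i(x))$ and $E_i = E_{\sigma_i}$, I would first observe that $\nabla\times(E_1-E_2)=0$, so there exists a scalar potential $u$ with $E_1-E_2=\nabla u$, and the Maxwell conditions force $u$ to solve the elliptic problem
\begin{equation*}
\nabla\cdot(\sigma_1\nabla u) = -\nabla\cdot\bigl((\sigma_1-\sigma_2)E_2\bigr)\quad\text{in }\Omega,\qquad \sigma_1\nabla u\cdot\nu = -(\sigma_1-\sigma_2)E_2\cdot\nu\quad\text{on }\partial\Omega,
\end{equation*}
whose well-posedness and quantitative estimates come from the electromagnetic results of Section \ref{proofs stability estimates}, giving a bound of the form $\|\nabla u\|_{L^2(\Omega)}\leq \Lambda\|\sigma_1-\sigma_2\|_{L^2(\Omega)}\|E_2\|_{L^\infty}$.

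Next I would linearize the difference $\sigma_1-\sigma_2$ along the segment joining $\gamma_2$ and $\gamma_1$ by the fundamental theorem of calculus, writing
\begin{equation*}
\sigma_1(x)-\sigma_2(x) = N(x)\bigl(\gamma_1(x)-\gamma_2(x)\bigr),\qquad N(x):=\int_0^1 \frac{\partial A}{\partial t}\bigl(x,\gamma_2(x)+s(\gamma_1(x)-\gamma_2(x))\bigr)\,ds,
\end{equation*}
where $N$ is a symmetric-matrix-valued function whose $C^{0,\beta}$ norm is controlled by $\mathcal{E}_2$ thanks to Definition \ref{A'}. Using the identity $\nabla\cdot(V\times B_0)=B_0\cdot(\nabla\times V)$ for constant $B_0$, I rewrite
\begin{equation*}
\tilde{F}_1-\tilde{F}_2 = B_0\cdot\nabla\times\bigl(\sigma_1\nabla u + (\sigma_1-\sigma_2)E_2\bigr),
\end{equation*}
expand the curl, and substitute $\sigma_1-\sigma_2=N(\gamma_1-\gamma_2)$. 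This produces one principal term of the form $B_0\cdot\nabla\times\bigl(N(\gamma_1-\gamma_2)E_2\bigr)$, which under differentiation yields a dominant contribution proportional to $\nabla(\gamma_1-\gamma_2)$, together with lower-order error terms involving $\nabla u$, $\nabla N$, $\nabla E_2$ and $\nabla\sigma_1$.

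The core of the argument is then to show that the dominant term controls $\|\nabla(\gamma_1-\gamma_2)\|_{L^2(\Omega)}$ from below, after absorbing the error terms into the left-hand side using the smallness hypothesis $\mathcal{C}\bigl(C_2(\mathcal{E}_2+1)+\Lambda C_4\bigr)<\tfrac12$. Each error term can be bounded by a product of $\|\gamma_1-\gamma_2\|_{L^2(\Omega)}$ (or $\|\nabla(\gamma_1-\gamma_2)\|_{L^2(\Omega)}$) with constants built from the $L^\infty$ and $C^{0,\beta}$ bounds of $A$, $\partial_t A$ and $\partial_{x_i}A$; Proposition 2.2 supplies the $C^{1,\beta}$ bound $\mathcal{F}_2$ on $\sigma_i$, and Definition \ref{A'} gives the H\"older control on $N$. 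Finally, the Poincar\'e-type hypothesis \eqref{Gradient sigma difference assumption} allows the replacement of $\|\nabla(\gamma_1-\gamma_2)\|_{L^2(\Omega)}$ by $\|\gamma_1-\gamma_2\|_{L^2(\Omega)}$, and combining everything yields the desired Lipschitz bound with a constant $\tilde C$ depending only on $\mathcal{C}$, $K$, $\mathcal{E}_2$, $\Lambda$, $M$, $r_0$ and $L$. The main technical obstacle will be the bookkeeping of the additional curl terms arising from the explicit $x$-dependence of $A$, which are absent in the setting of Theorem \ref{theorem 1}; controlling these is precisely where the quantity $\mathcal{E}_2$ (rather than $\mathcal{E}_1$) and the factor $(1+K)$ in the definition of $\mathcal{F}_2$ enter the final constants $C_2$ and $C_4$.
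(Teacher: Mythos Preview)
Your proposal misses the central mechanism of the proof and, as written, does not go through. The paper does \emph{not} try to bound $\|\nabla(\gamma_1-\gamma_2)\|_{L^2}$ from below from the pointwise expansion of $\tilde F_1-\tilde F_2$. Instead it tests the difference $\tilde F_1-\tilde F_2$ against $\gamma_1-\gamma_2$ and integrates over $\Omega$. After writing $A(x,\gamma_1)-A(x,\gamma_2)=(\gamma_1-\gamma_2)\,\partial_t A(x,t)\big|_{t=c(x)}$ and splitting into $I_1+I_2+I_3$ with $I_1=\nabla\cdot\bigl((\gamma_1-\gamma_2)E_1\times B_0\bigr)$, an integration by parts together with the key identity
\[
\nabla\cdot(E_1\times B_0)=B_0\cdot(\nabla\times E_1)=|B_0|^2=1
\]
produces the coercive term $\int_\Omega(\gamma_1-\gamma_2)I_1\,dx=\tfrac12\|\gamma_1-\gamma_2\|_{L^2}^2$. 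The remaining pieces $I_2,I_3$ are then bounded \emph{above} by $\mathcal{C}C_2(\mathcal{E}_2+1)\|\gamma_1-\gamma_2\|_{L^2}^2$ and $\mathcal{C}\Lambda C_4\|\gamma_1-\gamma_2\|_{L^2}^2$, using \eqref{Gradient sigma difference assumption} in the correct direction (as an upper bound on $\|\nabla(\gamma_1-\gamma_2)\|_{L^2}$). This is where the smallness hypothesis enters.

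Your route has two concrete problems. First, the ``dominant contribution proportional to $\nabla(\gamma_1-\gamma_2)$'' that you extract is only the single directional derivative $\nabla(\gamma_1-\gamma_2)\cdot(NE_2\times B_0)$; there is no reason this scalar controls the full $\|\nabla(\gamma_1-\gamma_2)\|_{L^2}$ from below. Second, even if it did, hypothesis \eqref{Gradient sigma difference assumption} is a \emph{reverse} Poincar\'e inequality ($\|\nabla(\gamma_1-\gamma_2)\|\le\mathcal{C}\|\gamma_1-\gamma_2\|$), so it cannot convert a lower bound on the gradient into a lower bound on $\|\gamma_1-\gamma_2\|_{L^2}$; you have the inequality running the wrong way. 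Finally, your anticipated ``main technical obstacle'' --- extra curl terms from the $x$-dependence of $A$ --- simply does not arise in the paper's argument: once the mean value theorem is applied in the $t$-variable at each fixed $x$, the proof of Theorem~\ref{theorem 2} is line-by-line identical to that of Theorem~\ref{theorem 1}, with $\mathcal{E}_2$, $C_2$, $C_4$ replacing $\mathcal{E}_1$, $C_1$, $C_3$.
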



\section{Proof of the stability estimates}\label{proofs stability estimates}

To prove our main stability results, we proceed by reducing the Maxwell system \eqref{Maxwell} into a Neumann boundary value problem, for which we recall standard estimates that will be needed for the arguments in the proofs of our stability results.


\subsection{Technical results}

In what follows we recall standard estimates for the solution of Neumann boundary value problems of type
\begin{equation}\label{e1}
      \Bigg\{\begin{array}{llll}
        \nabla \cdot (\sigma \nabla u) & = -\nabla \cdot E, \quad &\text{in} \quad  \Omega,\\
        (\sigma \nabla u + E) \cdot \nu & = 0, \quad &\text{on} \quad \partial \Omega.
        \end{array}
        \end{equation}
        Such estimates are standard in the theory of elliptic partial differential equations and we recall them, together with their proof, for the sake of completeness.
\begin{proposition} \label{proposition3.3}
Let $\sigma\in C^{1,\beta}(\overline\Omega)$ be a matrix-valued function satisfying the uniform ellipticity condition
\begin{equation}\label{ellipticity 3}
    \Lambda^{-1} |\xi|^2 \leq \sigma(x) \xi \cdot \xi \leq \Lambda |\xi|^2, \quad \textnormal{for\:a.e.}\: x \in \Bar{\Omega},\quad \forall \xi \in \mathbb{R}^3
\end{equation}
and let $E\in$ $L^2(\Omega)$ be a vector-valued function. The Neumann problem \eqref{e1} has a unique solution $u \in H^1(\Omega)$ (up to an additive constant) that satisfies
\begin{equation} \label{Delta u proposition}
    ||\nabla u||_{L^2(\Omega)} \leq \Lambda ||E||_{L^2(\Omega)}.
\end{equation}
\end{proposition}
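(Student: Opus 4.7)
The plan is to pass to the variational formulation and invoke the Lax--Milgram theorem on the quotient space $H^1(\Omega)/\mathbb{R}$, which is the natural setting because solutions are only determined up to an additive constant. Note that although the statement assumes $\sigma\in C^{1,\beta}(\overline\Omega)$, only the uniform ellipticity condition \eqref{ellipticity 3} is needed for this proposition.

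First I would derive the weak formulation. Multiplying the equation $\nabla\cdot(\sigma\nabla u) = -\nabla\cdot E$ by a test function $\varphi \in H^1(\Omega)$, integrating over $\Omega$, applying the divergence theorem on both sides, and using the Neumann boundary condition $(\sigma\nabla u + E)\cdot\nu = 0$ on $\partial\Omega$ to cancel the boundary integrals, one obtains
\[
\int_\Omega \sigma\nabla u \cdot \nabla\varphi \, dx = -\int_\Omega E\cdot\nabla\varphi \, dx, \qquad \forall\,\varphi \in H^1(\Omega).
\]
Taking $\varphi$ to be any constant yields $0=0$, confirming the natural compatibility condition and the fact that $u$ is defined only up to a constant. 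I therefore restrict attention to $V := \{u\in H^1(\Omega)\,:\,\int_\Omega u=0\}$, which by the Poincar\'e--Wirtinger inequality is a Hilbert space under the inner product $(u,v)_V = \int_\Omega\nabla u\cdot\nabla v\,dx$.

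Next I would apply Lax--Milgram to the bilinear form $a(u,v) = \int_\Omega \sigma\nabla u\cdot\nabla v\,dx$ on $V\times V$: continuity (with constant $\Lambda$) and coercivity (with constant $\Lambda^{-1}$) follow immediately from the two-sided bound \eqref{ellipticity 3}, while the right-hand side $\varphi\mapsto -\int_\Omega E\cdot\nabla\varphi\,dx$ is a bounded linear functional on $V$ by Cauchy--Schwarz, with norm at most $\|E\|_{L^2(\Omega)}$. This gives the existence of a unique $u\in V$ solving the weak formulation; uniqueness up to an additive constant for $H^1(\Omega)$ solutions then follows by subtracting any two candidate solutions and testing against their difference.

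The estimate \eqref{Delta u proposition} comes directly from testing the weak formulation against $\varphi = u$ itself:
\[
\Lambda^{-1}\|\nabla u\|_{L^2(\Omega)}^2 \leq \int_\Omega \sigma\nabla u\cdot\nabla u \, dx = -\int_\Omega E\cdot\nabla u \, dx \leq \|E\|_{L^2(\Omega)}\|\nabla u\|_{L^2(\Omega)},
\]
after which division by $\|\nabla u\|_{L^2(\Omega)}$ (if nonzero) gives the claimed bound with constant $\Lambda$. There is no genuine obstacle here since the result is a standard application of Lax--Milgram; the only point worth care is the quotient-space setup, which ensures both that the functional on the right is well-defined and that the coercivity can be read off as an equivalent norm.
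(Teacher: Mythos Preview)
Your proposal is correct and follows essentially the same approach as the paper for the estimate: test the weak formulation against $u$ itself, use the lower ellipticity bound on the left and Cauchy--Schwarz on the right, then divide. The paper's proof in fact records only this estimate step and does not spell out existence and uniqueness at all, so your Lax--Milgram argument on $V=\{u\in H^1(\Omega):\int_\Omega u=0\}$ is more complete than what the paper provides, but the substance is the same.
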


\begin{proof}
As a straighforward consequence of
\begin{equation} \label{u12}
\int_\Omega A \nabla{u} \cdot \nabla u\:dx = - \int_\Omega {E}\cdot\nabla{u}\:dx,
\end{equation}
we obtain
\begin{eqnarray}
\Lambda^{-1} ||\nabla{u}||^2_{L^2(\Omega)} \leq \Big| - \int_\Omega {E}\cdot\nabla{u} dx \Big| \leq ||{E}||_{L^2(\Omega)} ||\nabla{u}||_{L^2(\Omega)},
\end{eqnarray}
which concludes the proof.
\end{proof}

\begin{proposition}\label{prop4}
Let $\gamma_i$ satisfy \eqref{sigma condition 1}, \eqref{sigma condition 2}, for $i=1,2$.

\begin{enumerate}

\item If $A\in\mathcal{A}$ and $\sigma_i(x) = A(\gamma_i (x))$, $x\in\overline\Omega$, for $i=1,2$, then we have
\begin{eqnarray}
& & ||E_{A(\gamma_i)}||_{C^{1, \beta}(\Omega)} \leq C_1,\qquad\textnormal{for}\:i=1,2,\label{Efield bound1}\\
& & ||E_{A(\gamma_1)} - E_{A(\gamma_2)}||_{L^2(\Omega)} \leq C_3||\gamma_1 - \gamma_2||_{L^2(\Omega)}, \label{Efield difference bound1} \\
& & ||E_{A(\gamma_1)} - E_{A(\gamma_2)}||_{H^1(\Omega)} \leq \tilde{C_3}||\gamma_1 - \gamma_2||_{H^1(\Omega)}, \label{Efield difference bound1 in H1}
\end{eqnarray}
where $E_{A(\gamma_i)}$ is the unique solution to \eqref{Maxwell}, with $\sigma_i = A(\gamma_i)$, for $i=1,2$ and $C_1$, $C_3$, $\tilde{C_3}$ are positive constants depending only on $\Lambda$, $r_0$ and $L$.

\item Similarly, if and $A\in\mathcal{A}'$ and $\sigma_i (x)= A(x, \gamma_i (x))$, $x\in\overline\Omega$, for $i=1,2$, then we have
\begin{eqnarray}
& & ||E_{A(\cdot, \gamma_i)}||_{C^{1, \beta}(\Omega)} \leq C_2,\qquad\textnormal{for}\:i=1,2,\label{Electric field bound2}\\
& & ||E_{A(\cdot, \gamma_1)} - E_{A(\cdot, \gamma_2)}||_{L^2(\Omega)} \leq C_4||\gamma_1 - \gamma_2||_{L^2(\Omega)}, \label{E field difference bound2}\\
& & ||E_{A(\cdot, \gamma_1)} - E_{A(\cdot, \gamma_2)}||_{H^1(\Omega)} \leq \tilde{C_4}||\gamma_1 - \gamma_2||_{H^1(\Omega)}, \label{E field difference bound2 in H1}
\end{eqnarray}
where $E_{A(\cdot, \gamma_i)}$ is the unique solution to \eqref{Maxwell}, with $\sigma_i = A(\cdot, \gamma_i)$, for $i=1,2$, with $C_2$, $C_4$, $\tilde{C_4}$ being positive constants depending only on $\Lambda$, $r_0$ and $L$.

\end{enumerate}

\end{proposition}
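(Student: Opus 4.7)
My plan is to reduce the Maxwell system \eqref{Maxwell} to a Neumann problem of the form \eqref{e1} already handled in Proposition \ref{proposition3.3}, and then bootstrap the resulting $L^{2}$ energy estimate to $C^{1,\beta}$ via classical Schauder theory up to the $C^{1,\beta}$ boundary. First I fix an explicit smooth particular solution of $\nabla \times E_0 = B_0$ on $\Omega$, for instance $E_0(x) = \tfrac{1}{2}(-x_2, x_1, 0)$, which satisfies $|E_0(x)| \leq M/2$ since $\textnormal{diam}(\Omega) = M$. Writing $E_{\sigma} = E_0 + \nabla u$ turns \eqref{Maxwell} into exactly \eqref{e1} with source $E = \sigma E_0$, so Proposition \ref{proposition3.3} yields $u$ (unique up to a constant) with
\begin{equation*}
\|\nabla u\|_{L^2(\Omega)} \leq \Lambda \|\sigma E_0\|_{L^2(\Omega)} \leq \Lambda \mathcal{F}_1 \tfrac{M}{2}(\mu(\Omega))^{\frac{1}{2}}.
\end{equation*}
The triangle inequality together with the pointwise bound on $E_0$ then gives $\|E_{\sigma}\|_{L^2(\Omega)} \leq (1 + \Lambda \mathcal{F}_1)\tfrac{M}{2}(\mu(\Omega))^{\frac{1}{2}} = C_1$.

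To upgrade this to \eqref{Efield bound1}, I would invoke Schauder estimates up to $\partial\Omega$ for the Neumann problem for $u$: since $\sigma = A(\gamma_i) \in C^{1,\beta}(\overline\Omega)$ with norm bounded by $\mathcal{F}_1$ from Proposition \ref{proposition2.1} and the source/Neumann datum arise from the $C^{1,\beta}$ vector field $\sigma E_0$, the classical Agmon--Douglis--Nirenberg theory yields $u \in C^{2,\beta}(\overline\Omega)$ with norm controlled in terms of $\Lambda, \mathcal{F}_1, r_0, L$; adding $E_0$ back produces the $C^{1,\beta}$ bound on $E_{\sigma}$ claimed in \eqref{Efield bound1}, with the extra Schauder constants absorbed into the implicit dependence on $\Lambda, r_0, L$.

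For the difference estimates I would exploit that $\nabla \times (E_{\sigma_1} - E_{\sigma_2}) = 0$ forces $E_{\sigma_1} - E_{\sigma_2} = \nabla w$ for some scalar $w$; subtracting the divergence and no-flux conditions satisfied by each $\sigma_i E_{\sigma_i}$ shows $w$ solves
\begin{equation*}
\nabla \cdot (\sigma_1 \nabla w) = -\nabla \cdot ((\sigma_1 - \sigma_2) E_{\sigma_2}) \text{ in } \Omega, \quad (\sigma_1 \nabla w + (\sigma_1 - \sigma_2) E_{\sigma_2}) \cdot \nu = 0 \text{ on } \partial\Omega,
\end{equation*}
which is again of the form \eqref{e1} with source $(\sigma_1 - \sigma_2) E_{\sigma_2}$. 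Proposition \ref{proposition3.3}, combined with the mean-value Lipschitz estimate $|A(\gamma_1(x)) - A(\gamma_2(x))| \leq \mathcal{E}_1 |\gamma_1(x) - \gamma_2(x)|$ from \eqref{Derivative assumption} and the $L^{\infty}$ bound $\|E_{\sigma_2}\|_{L^\infty(\Omega)} \leq C_1$ already obtained, yields $\|\nabla w\|_{L^2(\Omega)} \leq \Lambda \mathcal{E}_1 C_1 \|\gamma_1 - \gamma_2\|_{L^2(\Omega)} = C_3 \|\gamma_1 - \gamma_2\|_{L^2(\Omega)}$, proving \eqref{Efield difference bound1}. The $H^1$ bound \eqref{Efield difference bound1 in H1} then comes from $H^2$ regularity for $w$: expanding $\nabla \cdot ((A(\gamma_1) - A(\gamma_2)) E_{\sigma_2})$ via the chain rule and rewriting $\nabla(A(\gamma_1) - A(\gamma_2))$ as $\tfrac{dA}{dt}(\gamma_1)\nabla(\gamma_1 - \gamma_2) + (\tfrac{dA}{dt}(\gamma_1) - \tfrac{dA}{dt}(\gamma_2))\nabla\gamma_2$, assumptions \eqref{Derivative assumption}, \eqref{sigma condition 2} and the $C^{1,\beta}$ bound \eqref{Efield bound1} control all factors in terms of $\|\gamma_1 - \gamma_2\|_{H^1(\Omega)}$.

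The case $A \in \mathcal{A}'$ is handled by the same three-step scheme with only bookkeeping changes: the Lipschitz estimate becomes $|A(x, \gamma_1(x)) - A(x, \gamma_2(x))| \leq \mathcal{E}_2 |\gamma_1(x) - \gamma_2(x)|$ by \eqref{partial derivative assumption}, yielding $C_4 = \Lambda \mathcal{E}_2 C_2$, and in the $H^1$ difference the chain rule additionally produces $\partial_{x_i} A$ contributions also packaged into $\mathcal{E}_2$. The main technical obstacle I anticipate is the careful tracking of Schauder constants up to the boundary --- because $E_0$ is not tangential on $\partial\Omega$, the effective Neumann datum for $u$ is nontrivial and the $C^{1,\beta}$ regularity of $\partial\Omega$ must be used in its full strength --- and, for the $H^1$ difference estimate, controlling the H\"older remainder $(\tfrac{dA}{dt}(\gamma_1) - \tfrac{dA}{dt}(\gamma_2))\nabla \gamma_2$ pointwise by $|\gamma_1 - \gamma_2|^\beta |\nabla \gamma_2|$ and then in $L^2$ by $\|\gamma_1 - \gamma_2\|_{H^1(\Omega)}$ via Sobolev embedding; this is where the dependence of $\tilde{C}_3, \tilde{C}_4$ on $K, \mathcal{E}_i$ enters in a non-obvious way, while the rest of the argument is a mechanical application of Proposition \ref{proposition3.3} and the chain rule.
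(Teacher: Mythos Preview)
Your proposal is correct and follows essentially the same route as the paper: the same explicit particular solution $E_0=\tfrac12(-x_2,x_1,0)$, the same reduction of \eqref{Maxwell} to the Neumann problem \eqref{1.15}, the same identification of $E_{\sigma_1}-E_{\sigma_2}=\nabla w$ as solving a Neumann problem with source $(\sigma_1-\sigma_2)E_{\sigma_2}$, the mean-value (Lagrange) estimate on $A(\gamma_1)-A(\gamma_2)$, and the appeal to $H^2$ elliptic regularity for \eqref{Efield difference bound1 in H1}, yielding the identical constants $C_1$ and $C_3=\Lambda\mathcal{E}_1 C_1$. Your write-up is in fact more explicit than the paper's about the Schauder step for \eqref{Efield bound1} and the chain-rule bookkeeping for the $H^1$ difference, both of which the paper dispatches by citing \cite{A-Q-S-Z} and ``standard theory of elliptic equations'' respectively.
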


\begin{proof}
As the proof follows the same line of \cite[Proof of Proposition 2]{A-Q-S-Z}, we only highlight the main steps of case $(a)$ and only point out where case $(b)$ differs from it, as such modifications are minor. For case $(a)$ we start by noticing that for $\Tilde{E}$ = $\frac{1}{2}(-y, x, 0)$ we have that $\nabla \times (E_{A(\gamma_i)} - \Tilde{E}) = 0$, for $i=1,2$ and $E_{A(\gamma_i)}= \Tilde{E} + \nabla u$, where $u_i$ solves the Neumann problem
\begin{equation}\label{1.15}
      \Bigg\{\begin{array}{llll}
        \nabla \cdot (A(\gamma_i) \nabla u_i) & = -\nabla \cdot (A(\gamma_i)\Tilde{E}) \quad &\textnormal{in} \quad  \Omega;\\
        (A(\gamma_i) \nabla u_i + A(\gamma_i) \Tilde{E}) \cdot \nu & = 0 \quad & \textnormal{on} \quad \partial \Omega,\end{array}
        \end{equation}
for $i=1,2$. Observing that
\begin{equation}\label{E tilde L2}
||\Tilde{E}||_{L^2(\Omega)} \leq\frac{1}{2} M \big(\mu (\Omega)\big)^\frac{1}{2},
\end{equation}
where $\mu(\Omega)$ denotes the Lebesgue measure of $\Omega$, estimate \eqref{Efield bound1} is a straightforward consequence of
\eqref{sigma condition 2} and proposition \ref{proposition2.1}  (see also  \cite{A-Q-S-Z}).

To prove \eqref{Efield difference bound1}, \eqref{Efield difference bound1 in H1}, we start by simplifying our notation. We denote $E_{A(\gamma_i)}$, simply with $E_i$, for $i$ = 1, 2. Noting that $\nabla\times E_1 = \nabla\times E_2$, we write
\begin{equation}
    \nabla v = E_1 - E_2,
\end{equation}
where $v$ is the unique solution to

\begin{equation}\label{1.5}
      \Bigg\{\begin{array}{llll}
        \nabla \cdot (A(\gamma_1) \nabla v) & = -\nabla \cdot ((A(\gamma_1) - A(\gamma_2)) E_2) \quad &\textnormal{in} \quad  \Omega,\\
        (A(\gamma_1) \nabla v  + (A(\gamma_1) - A(\gamma_2)) E_2) \cdot \nu & = 0 \quad &\textnormal{on} \quad \partial \Omega\end{array}
        \end{equation}
and
\begin{equation}
\int_{\partial \Omega} v = 0.
\end{equation}
Thus by Cauchy Schwartz we obtain
\begin{equation}
    \int_\Omega A(\gamma_1) \nabla{v} \cdot \nabla{v}  dx \leq \int_\Omega ((A(\gamma_1) -A(\gamma_2)){E_2})\cdot\nabla{v} \: dx,
\end{equation}
which, combined with \eqref{Efield bound1} and the uniform ellipticity condition \eqref{ellipticity 1}, leads to
\begin{equation}\label{grad difference 1}
||\nabla{v}||_{L^2(\Omega)} \leq \Lambda C_1 ||A(\gamma_1) -A(\gamma_2)||_{L^2(\Omega)}.
\end{equation}
From the Lagrange Theorem, for every $x\in\Omega$, there exists $t(x)$, $0<t(x)<1$, such that
\begin{equation}
A(\gamma_1(x)) -A(\gamma_2(x)) = (\gamma_1 - \gamma_2)(x)\: \frac{d A(t)}{dt}\Big |_{t = c(x)},
\end{equation}
where $c(x) = \gamma_{2}(x)+t(x)\big(\gamma_{1}(x) - \gamma_{2}(x)\big)$. Hence we obtain
\begin{equation}
||\nabla v||_{L^2(\Omega)}\leq \Lambda C_1 ||\gamma_1 -\gamma_2||_{L^2(\Omega)} \Big|\Big|\frac{d A(t)}{dt}\Big |_{t = c(\cdot)} \Big|\Big|_{L^2(\Omega)}
\end{equation}
and
\begin{equation}
||E_1 - E_2||_{L^2(\Omega)}\leq C_3 ||\gamma_1 -\gamma_2||_{L^2(\Omega)},
\end{equation}
where $C_3 = \Lambda \mathcal{E}_1 C_1$. From the standard theory of elliptic equations
\begin{equation}
    ||v||_{H^2(\Omega)} \leq C||\nabla \cdot ((A(\gamma_1) - A(\gamma_2)) E_2)||_{L^2(\Omega)},
\end{equation}
where $C$ is a positive constant depending only on $\lambda$, $\Lambda$, $r_0$ and $L$, which implies \eqref{Efield difference bound1 in H1}.

For case $(b)$, where $A\in\mathcal{A'}$ and $\gamma_i\in C^{1,\beta}(\overline\Omega)$ is a scalar function satisfying \eqref{assumptions sigma}, for $i=1,2$, we have, by \eqref{E tilde L2} and proposition \ref{proposition3.3} that
\begin{equation}
||E_{A(\cdot,\gamma_i)}||_{L^2(\Omega)} \leq C_2,\qquad\textnormal{for}\: i=1,2,
\end{equation}
where  $C_2 = (1 + \Lambda \mathcal{F}_2)(\frac{1}{2} M (\mu (\Omega))^\frac{1}{2})$. With a similar argument to case $(a)$, from the Lagrange Theorem, for every $x\in\Omega$, there exists $t(x)$, $0<t(x)<1$, such that 
\begin{equation}
A(x, \gamma_1(x)) -A(x, \gamma_2(x)) = (\gamma_1 - \gamma_2)(x) \frac{\partial A(x,\:t)}{\partial t}\Big |_{t = c(x)},
\end{equation}
where $c(x) = \gamma_{2}(x)+t(x)\big(\gamma_{1}(x) - \gamma_{2}(x)\big)$, which leads to
\begin{equation}
||E_1 - E_2||_{L^2(\Omega)}\leq \Lambda C_1 ||\gamma_1 -\gamma_2||_{L^2(\Omega)} \Big|\Big|\frac{\partial A(\cdot,\: t)}{\partial t}\Big |_{t = c(\cdot)} \Big|\Big|_{L^2(\Omega)},
\end{equation}
hence
\begin{equation}
||E_1 - E_2||_{L^2(\Omega)}\leq C_4 ||\gamma_1 -\gamma_2||_{L^2(\Omega)},
\end{equation}
where $C_4 = \Lambda \mathcal{E}_2 C_2$.  The remaining \eqref{E field difference bound2 in H1} follows by a similar argument used to prove \eqref{Efield difference bound1 in H1} in case $(a)$.
\end{proof}


\subsection{Proof of the stability estimates}

\begin{proof}[Proof of Theorem \ref{theorem 1}]

We write the difference in the data $F_1 - F_2$ as
\begin{equation}
\begin{split}
 F_1 - F_2 &= \nabla \cdot \Big(A(\gamma_1) E_1 \times B_0\Big) - \nabla \cdot \Big(A(\gamma_2) E_2 \times B_0\Big) \\
&= \nabla \cdot \Big((A(\gamma_1) - A(\gamma_2))E_1 \times B_0\Big)  + \nabla \cdot \Big(A(\gamma_2) (E_1 - E_2) \times B_0\Big) 
\end{split}
\end{equation}
and from the Lagrange Theorem, for every $x\in\Omega$, there exists $t(x)$, $0<t(x)<1$, such that
\begin{equation}
F_1 - F_2 = \nabla \cdot \Big((\gamma_1 - \gamma_2) \frac{dA}{dt}(t)\Big|_{t = c(\cdot)}E_1 \times B_0\Big)  + \nabla \cdot \Big(A(\gamma_2)\:(E_1 - E_2) \times B_0\Big),
\end{equation}
where $c(x) = \gamma_{2}(x)+t(x)\big(\gamma_{1}(x) - \gamma_{2}(x)\big)$.
Then, we can split $F_1 - F_2$ into three contributions
\begin{equation}
F_1 - F_2 = I_1 + I_2 + I_3,
\end{equation}
where
\begin{equation}
\begin{split}
& I_1 = \nabla \cdot \Big((\gamma_1 - \gamma_2) E_1 \times B_0\Big),\\
&I_2 = \nabla \cdot \Big((\gamma_1 - \gamma_2) \Big(\frac{dA}{dt}(t)\Big|_{t = c(\cdot)} - I\Big)E_1 \times B_0\Big),\\
&I_3 = \nabla \cdot \Big(A(\gamma_2) (E_1 - E_2) \times B_0\Big),
\end{split}
\end{equation}
where $I$ denotes the $3\times 3$ identity matrix. To extract information about $\gamma_1 - \gamma_2$ in $\Omega$ we multiply the data difference $F_1 - F_2$ by $\gamma_1 - \gamma_2$ and integrate over $\Omega$.  We do this by considering each contribution $I_i$, i = 1, 2, 3, to the data separately. Starting with $I_1$, we obtain
\begin{equation}\label{I1}
\begin{split}
\int_\Omega (\gamma_1 - \gamma_2)I_1 dx
&= \int_\Omega (\gamma_1 - \gamma_2)^2 \nabla \cdot ( E_1 \times B_0) \\
&+ \frac{1}{2} \nabla(\gamma_1 - \gamma_2)^2 \cdot (E_1 \times B_0) dx \\
&= \frac{1}{2} \int_\Omega (\gamma_1 - \gamma_2)^2 \nabla \cdot (E_1 \times B_0) dx \\
&= \frac{1}{2} || \gamma_1 - \gamma_2||^2_{L^2(\Omega)},
 \end{split}
 \end{equation}
where in the second equality of \eqref{I1} we performed integration by parts and used the fact that $\gamma_1$ and $\gamma_2$ share the same boundary values. In the last equality, we used the fact that $\nabla \cdot (E_1 \times B_0) =1$. \\

Multiplying $I_2$ by $\gamma_1 - \gamma_2$, we have
 \begin{equation}\label{I2}
 \begin{split}
\left|\int_\Omega (\gamma_1 - \gamma_2)I_2 dx\right|   &= \Big|\int_\Omega (\gamma_1 - \gamma_2)^2 \nabla \cdot \left(\Big(\frac{dA}{dt}(t)\Big|_{t=c(\cdot)} - I\Big)E_1 \times B_0\right) \\
&+ (\gamma_1 - \gamma_2)\nabla(\gamma_1 - \gamma_2) \cdot \left(\Big(\frac{dA}{dt}(t)\Big|_{t=c(\cdot)} - I\Big)E_1 \times B_0\right) dx\Big| \\
&= \Big|\int_\Omega(\gamma_1 - \gamma_2)\nabla(\gamma_1 - \gamma_2) \cdot \left(\Big(\frac{dA}{dt}(t)\Big|_{t=c(\cdot)} - I\Big)E_1 \times B_0\right) dx\Big| \\
&\leq \mathcal{C} C_1|\mathcal{E}_1-1|||\gamma_1 - \gamma_2||^2_{L^2(\Omega)},
 \end{split}
 \end{equation}
where in the second equality of \eqref{I2} we again performed integration by parts and again recalled that $\gamma_1$ and $\gamma_2$ share the same boundary. In the inequality, estimate \eqref{Derivative assumption}, \eqref{Gradient sigma difference assumption} and \eqref{Efield bound1} have been used. \\

Finally, multiplyig $I_3$ by $\gamma_1 - \gamma_2$, leads to
 \begin{equation}\label{I3}
 \begin{split}
\left|\int_\Omega (\gamma_1 - \gamma_2)I_3 dx\right|
&= \left|\int_\Omega \nabla(\gamma_1 - \gamma_2) \cdot \Big(A(\gamma_2) (E_1 - E_2) \times B_0\Big) dx \right| \\
&\leq \mathcal{C} \Lambda C_3||\gamma_1 - \gamma_2||^2_{L^2(\Omega)},
 \end{split}
 \end{equation}
where the first equality in \eqref{I3} is derived again by performing integration by parts and noticing that $\gamma_1$ and $\gamma_2$ share the same boundary values. In the inequality, \eqref{ellipticity 1}, assumption \eqref{Gradient sigma difference assumption} and \eqref{Efield difference bound1} have been used. \\

Hence, for $ \mathcal{C} C_1(\mathcal{E}_1 +1) + \mathcal{C} \Lambda C_3 <\frac{1}{2}$, we obtain
\begin{equation}
    \int_{\Omega} (\gamma_1 - \gamma_2)(F_1 - F_2)dx \geq C ||\gamma_1 - \gamma_2||^2_{L^2(\Omega)},
\end{equation}
where $C = \frac{1}{2} - \mathcal{C} \Big(C_1(\mathcal{E}_1 +1) + \Lambda C_3\Big)>0$, which concludes the proof. 
\end{proof}


\begin{proof}[Proof of Theorem \ref{theorem 2}]
Similarly to the proof of theorem \ref{theorem 1}, we write the data difference as
\begin{equation}
\begin{split}
 \tilde{F}(\gamma_1) - \tilde{F}(\gamma_2) &= \nabla \cdot (A(\cdot, \gamma_1) E_1 \times B_0) - \nabla \cdot (A(\cdot, \gamma_2) E_2 \times B_0) \\
&= \nabla \cdot ((A(\cdot, \gamma_1) - A(\cdot, \gamma_2))E_1 \times B_0)  + \nabla \cdot (A(\cdot,\gamma_2) (E_1 - E_2) \times B_0).
\end{split}
\end{equation}
From the Lagrange Theorem, for every $x\in\Omega$, there exists $t(x)$, $0<t(x)<1$, such that
\begin{equation}
A(x, \gamma_1) -A(x, \gamma_2) = (\gamma_1 - \gamma_2)(x) \frac{\partial A(x, t)}{\partial t}\Big |_{t = c(x)},
\end{equation}
where $c(x) = \gamma_{2}(x)+t(x)\big(\gamma_{1}(x) - \gamma_{2}(x)\big)$. Therefore we obtain

\begin{equation}
\tilde{F}_1 - \tilde{F}_2 = \nabla \cdot ((\gamma_1 - \gamma_2) \frac{\partial A(\cdot, t)}{\partial t}\Big |_{t = c(x)}E_1 \times B_0)  + \nabla \cdot (A(\cdot,\:\gamma_2)(E_1 - E_2) \times B_0).
\end{equation}

By splitting again $\tilde{F}_1 - \tilde{F}_2$ as
\begin{equation}
\tilde{F}_1 - \tilde{F}_2 = I_1 + I_2 + I_3,
\end{equation}
where
\begin{equation}
\begin{split}
& I_1 = \nabla \cdot \Big((\gamma_1 - \gamma_2) E_1 \times B_0\Big),\\
&I_2 = \nabla \cdot \left((\gamma_1 - \gamma_2) \Big(\frac{\partial A(\cdot, t)}{\partial t}\Big|_{t = c(\cdot)} - I \Big)E_1 \times B_0\right),\\
&I_3 = \nabla \cdot \Big(A(\cdot,\:\gamma_2) (E_1 - E_2) \times B_0\Big),
\end{split}
\end{equation}
with a similar procedure as that of the proof of Theorem \ref{theorem 1} we obtain the following estimates
 \begin{equation}\label{I1 bis}
 \begin{split}
\int_\Omega (\gamma_1 - \gamma_2)I_1 dx
&= \frac{1}{2} || \gamma_1 - \gamma_2||^2_{L^2(\Omega)}.
 \end{split}
 \end{equation}
 \begin{equation}\label{I2 bis}
 \begin{split}
\left|\int_\Omega (\gamma_1 - \gamma_2)I_2 dx \right| &=\left|\int_\Omega(\gamma_1 - \gamma_2)\nabla(\gamma_1 - \gamma_2) \cdot \left(\Big(\frac{\partial A(\cdot,\: t)}{\partial t}\Big|_{t=c(\cdot)} - I\Big)E_1 \times B_0\right) dx\right|\\
 &\leq \mathcal{C} C_2|\mathcal{E}_2-1|||\gamma_1 - \gamma_2||^2_{L^2} .
 \end{split}
 \end{equation}
 \begin{equation}\label{I3 bis}
 \begin{split}
\left|\int_\Omega (\gamma_1 - \gamma_2)I_3 dx\right| &= \left|\int_\Omega \nabla(\gamma_1 - \gamma_2) \cdot (A(\cdot,\:\gamma_2(\cdot))\:(E_1 - E_2) \times B_0) dx \right| \\
&\leq \mathcal{C} \Lambda C_4 ||\gamma_1 - \gamma_2||^2_{L^2}
 \end{split}
 \end{equation}
By combining \eqref{I1 bis} - \eqref{I3 bis} together and choosing the parameters such that$(\mathcal{C} C_2(\mathcal{E}_2 +1) + \mathcal{C} \Lambda C_4)<\frac{1}{2}$, we finally obtain
\begin{equation}
    \int_{\Omega} (\gamma_1 - \gamma_2)(\Tilde{F}(\gamma_1) - \Tilde{F}(\gamma_2))dx \geq \Tilde{C} ||\gamma_1 - \gamma_2||^2_{L^2(\Omega)},
\end{equation}
where $\Tilde{C} = \frac{1}{2} - (\mathcal{C} C_2(\mathcal{E}_2 +1) + \mathcal{C} \Lambda C_4)>0$, which concludes the proof.
\end{proof}



\section{Reconstruction of $\gamma$}\label{reconstruction}


\subsection{The functional framework and the algorithm}\label{algorithm}
To ensure well-posedness of the inverse problem we introduce a functional framework, similar to that in \cite{A-Q-S-Z}, based assuming that $\gamma$ is known on the boundary of $\Omega$ and by letting the true scalar function to be  $\gamma_*$ (the unknown conductivity to be reconstructed) in $\Omega$, belonging to a bounded convex subset $\Tilde{S}$ of $C^{1, \beta}(\Omega)$, defined by
\begin{equation}
    \Tilde{S} = \{ \gamma := \gamma_0 + \alpha | \alpha \in S\},
\end{equation}
where $\gamma_0$ satisfies \eqref{sigma condition 1}, \eqref{sigma condition 2} and

\begin{equation}
    S = \{\alpha \in C^{1, \beta}_0(\Omega) | \lambda^{-1} \leq \alpha + \gamma_0 \leq \lambda, || \nabla \alpha||_{L^2(\Omega)} \leq \mathcal{C}||\alpha||_{L^2(\Omega)}, ||\alpha||_{L^2(\Omega)} \leq \tilde{k}\},
\end{equation}
for some positive constant $\tilde{k}$. \\
For $A\in\mathcal{A}$, the true conductivity is denoted by $A_* := A(\gamma_*)$  and the forward operator is given by
\begin{equation}
    F(\gamma) = \nabla \cdot (A (\gamma) E_{A(\gamma)} \times B_0),
\end{equation}
where $E_A := E_{A(\gamma)}$ solves
\begin{equation}
      \Bigg\{\begin{array}{llll}
         \nabla \times E_{A} & = B_0, \quad &\text{in} \quad \Omega,\\
        \nabla \cdot A(\gamma) E_{A} & = 0, \quad &\text{on} \quad  \Omega,\\        A(\gamma) E_{A} \cdot \nu & = 0, \quad &\text{on} \quad \partial \Omega
        \end{array} \end{equation}
 and the scalar function $\gamma$ is updated by solving a stationary advection-diffusion equation as shown in the algorithm below.\\
     

       \textbf{The Algorithm} (reconstruction)
\begin{itemize}
\item We choose an initial conductivity $\gamma_1 \in \Tilde{S}$ and set $k=1$.
\item At step $k$, defining $A_k:= A(\gamma_k)$, we solve the boundary value problem
\begin{equation} \label{Algorithm1}
      \Bigg\{\begin{array}{llll}
         \nabla \times E_k & = B_0, \quad &\text{in} \quad \Omega,\\
        \nabla \cdot A_k E_k & = 0, \quad &\text{on} \quad  \Omega,\\
        A_k E_k \cdot \nu & = 0, \quad &\text{on} \quad \partial \Omega.
       \end{array} \end{equation}
        to update the electric field $E_k$. 
\item By solving the stationary advection-diffusion equation with the inflow boundary condition, we calculate the updated conductivity $A_{k+1/2}:= A(\gamma_{k+1})$ as follows
\begin{equation}
      \Big\{\begin{array}{llll}\label{Step 3}
         \nabla \cdot (A_{k+1/2}E_k \times B_0) &= \nabla \cdot (A_*E_{A_*} \times B_0), &\text{in} \quad \Omega,\\
        \gamma_{k+1/2} & = \gamma_*, &\text{on} \quad \partial \Omega^-,
        \end{array} \end{equation}
        where
       \begin{equation}\label{inflow}
        \partial \Omega^- = \left\{x \in \partial \Omega | A(\gamma_k(x)) E_k(x) \times B_0 \cdot \nu (x) <0\right\}.
        \end{equation}
       
    \item We then let $\gamma_{k+1} = T[\gamma_{k+1/2}]$ where $T$ is the Hilbert projection operator onto $\Tilde{S}$, where the projection $T$ is needed as $\gamma_k$ might not be in $\Tilde{S}$. Finally, we set $k = k + 1$ and go to \eqref{Algorithm1}.
\end{itemize}

For $A\in\mathcal{A'}$ the true conductivity is denoted by $A_* := A(\cdot, \gamma_*)$  and the forward operator is given by
\begin{equation}
    \Tilde{F}(\gamma) = \nabla \cdot (A (\cdot, \gamma) E_{A(\cdot, \gamma)} \times B_0),
\end{equation}
where $E_A := E_{A(\cdot, \gamma)}$ solves
\begin{equation}
      \Bigg\{\begin{array}{llll}
         \nabla \times E_{A} & = B_0, \quad &\text{in} \quad \Omega,\\
        \nabla \cdot A(\cdot, \gamma) E_{A} & = 0, \quad &\text{on} \quad  \Omega,\\        A(\cdot, \gamma) E_{A} \cdot \nu & = 0, \quad &\text{on} \quad \partial \Omega
        \end{array} \end{equation}
and the scalar function $\gamma$ together with the electric field $E$ are updated as per Algorithm 1 above, where the inflow part of the boundary is in this case defined by
  \begin{equation}\label{inflow 2}
        \partial \Omega^- = \left\{x \in \partial \Omega | A(x, \gamma_k(x)) E_k(x) \times B_0 \cdot \nu (x) <0\right\}.
        \end{equation}


\subsection{Convergence analysis}\label{convergence analysis}
Under a smallness condition on $\frac{dA(t)}{dt}$ and $\frac{\partial A(\cdot,t)}{dt}$, on $[\lambda^{-1}, \lambda]$ with respect to $I$, for $A\in\mathcal{A}$ and $A\in\mathcal{A'}$, respectively (see \eqref{Double derivative bound1} and \eqref{Double derivative bound2}, respectively), we obtain the following convergence results.

\begin{theorem}\label{theorem CA1}
Suppose that the true conductivity $\gamma_* \in \Tilde{S}$. Let $A\in\mathcal{A}$ and, additionally, assume that
\begin{equation}\label{derivative A 1}
\frac{dA}{dt}\in C^{1}\left([\lambda^{-1}, \lambda], Sym\right)
\end{equation}
and that there is a constant $\Tilde{\mathcal{E}}_1$, $0<\Tilde{\mathcal{E}}_1<1$ such that
\begin{equation}\label{Double derivative bound1}
\sup_{t\in [\lambda^{-1}, \lambda]} \sup_{x\in\Omega}\left|\nabla\cdot \left[\left(\frac{dA}{dt} - I\right) E_k\times B_0\right]\right|\leq \Tilde{\mathcal{E}}_1.
\end{equation}
Assume that either
\begin{enumerate}
\item \[\gamma_k = \gamma_* , \qquad\textnormal{on}\quad\partial\Omega ,\quad\textnormal{for\:any}\:k,\quad k\geq 1 ;\]
or
\item \begin{eqnarray*}
& &\gamma_k = \gamma_* ,\qquad\textnormal{on}\quad\partial\Omega^{-}\quad\textnormal{and}\quad \sup_{t\in[\lambda^{-1}, \lambda]}\frac{dA}{dt} E_k\times B_0\cdot\nu\geq 0,\qquad\textnormal{on}\quad \partial\Omega^{+}, \\
& &\textnormal{for\:any}\:k,\quad k\geq 1.
\end{eqnarray*}
\end{enumerate}
Then, for an appropriate choice of  $\mathcal{C}$, $\Lambda$, $C_1$, $\tilde{C_3}$, $\mathcal{E}_1$ and $\tilde{\mathcal{E}_1}$, there exists a constant $C$, $0<C < 1$ such that
\begin{equation}
    || \gamma_{k+1} - \gamma_*||_{L^2(\Omega)} \leq C ||\gamma_k - \gamma_*||_{L^2(\Omega)}. \\ \\
\end{equation}
\end{theorem}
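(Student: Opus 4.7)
The plan is to leverage the structure of the stability proof of Theorem \ref{theorem 1} at each step of the iteration, then exploit the nonexpansive property of the Hilbert projection $T$ onto the convex set $\Tilde{S}$. Since $\gamma_*\in\Tilde{S}$ is a fixed point of $T$, we have
\[
\|\gamma_{k+1}-\gamma_*\|_{L^2(\Omega)} = \|T[\gamma_{k+1/2}]-T[\gamma_*]\|_{L^2(\Omega)}\leq \|\gamma_{k+1/2}-\gamma_*\|_{L^2(\Omega)},
\]
so it suffices to establish $\|\gamma_{k+1/2}-\gamma_*\|_{L^2(\Omega)}\leq C\|\gamma_k-\gamma_*\|_{L^2(\Omega)}$ with $C<1$.

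Subtracting \eqref{Step 3} from the analogous identity satisfied by $\gamma_*$ and applying the Lagrange theorem to $A$, as in the proof of Theorem \ref{theorem 1}, gives
\[
\nabla\cdot\Big((\gamma_{k+1/2}-\gamma_*)\,\tfrac{dA}{dt}(c)\,E_k\times B_0\Big) = -\nabla\cdot\Big(A(\gamma_*)(E_k-E_*)\times B_0\Big),
\]
for some $c(x)$ lying between $\gamma_*(x)$ and $\gamma_{k+1/2}(x)$. Testing against $\gamma_{k+1/2}-\gamma_*$, integrating over $\Omega$, and applying the product rule combined with integration by parts on the left mirrors the decomposition $I_1+I_2$ of Theorem \ref{theorem 1}. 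In Case 1, $(\gamma_{k+1/2}-\gamma_*)$ vanishes on all of $\partial\Omega$ and the boundary term drops out; in Case 2 the inflow condition makes it vanish on $\partial\Omega^-$, while the sign hypothesis $\frac{dA}{dt}E_k\times B_0\cdot\nu\geq 0$ on $\partial\Omega^+$ makes the remaining boundary contribution favorably signed. Using $\nabla\cdot(E_k\times B_0) = B_0\cdot\nabla\times E_k = 1$, writing $\tfrac{dA}{dt}(c)=I+\bigl(\tfrac{dA}{dt}(c)-I\bigr)$, and invoking \eqref{Double derivative bound1}, the left-hand side is bounded below by $\tfrac{1-\Tilde{\mathcal{E}}_1}{2}\|\gamma_{k+1/2}-\gamma_*\|^2_{L^2(\Omega)}$.

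For the right-hand side, a further integration by parts (boundary term zero by the same reasoning) yields
\[
\Big|\int_\Omega \nabla(\gamma_{k+1/2}-\gamma_*)\cdot\bigl(A(\gamma_*)(E_k-E_*)\times B_0\bigr)\,dx\Big|\leq \Lambda\,\|\nabla(\gamma_{k+1/2}-\gamma_*)\|_{L^2(\Omega)}\,\|E_k-E_*\|_{L^2(\Omega)}.
\]
The $H^1$ estimate \eqref{Efield difference bound1 in H1} of Proposition \ref{prop4} bounds $\|E_k-E_*\|_{L^2(\Omega)}\leq \tilde{C}_3\|\gamma_k-\gamma_*\|_{H^1(\Omega)}$, and using the inverse Poincar\'e-type inequality \eqref{Gradient sigma difference assumption} built into the defining constraint of $S$ applied to the relevant differences of iterates, we obtain $\|\nabla(\gamma_{k+1/2}-\gamma_*)\|_{L^2(\Omega)}\leq \mathcal{C}\|\gamma_{k+1/2}-\gamma_*\|_{L^2(\Omega)}$ and $\|\gamma_k-\gamma_*\|_{H^1(\Omega)}\leq(1+\mathcal{C})\|\gamma_k-\gamma_*\|_{L^2(\Omega)}$. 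Dividing through by $\|\gamma_{k+1/2}-\gamma_*\|_{L^2(\Omega)}$ produces
\[
\|\gamma_{k+1/2}-\gamma_*\|_{L^2(\Omega)} \leq \frac{2\Lambda\,\mathcal{C}\,\tilde{C}_3(1+\mathcal{C})}{1-\Tilde{\mathcal{E}}_1}\,\|\gamma_k-\gamma_*\|_{L^2(\Omega)},
\]
and selecting the parameters $\mathcal{C}$, $\Lambda$, $C_1$, $\tilde{C}_3$, $\mathcal{E}_1$, $\Tilde{\mathcal{E}}_1$ so that the prefactor is strictly less than one, together with the contraction step, yields the claim. The principal obstacles are the careful justification that the inverse Poincar\'e inequality can legitimately be applied to both $\gamma_k-\gamma_*$ and $\gamma_{k+1/2}-\gamma_*$ within the iterative framework, and the handling of the boundary integral in Case 2, where only partial boundary data are known and where the sign hypothesis on $\tfrac{dA}{dt}E_k\times B_0\cdot\nu$ is the essential ingredient needed to close the estimate.
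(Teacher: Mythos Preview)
Your overall architecture (projection step, Lagrange theorem, integration-by-parts identity on the left giving the $\tfrac{1-\tilde{\mathcal E}_1}{2}$ lower bound, handling of the boundary term via the two cases) matches the paper exactly. The divergence from the paper, and the genuine gap, is entirely in how you treat the right-hand side.

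The paper does \emph{not} integrate by parts on the right. It applies Cauchy--Schwarz directly to
\[
\int_\Omega (\gamma_{k+1/2}-\gamma_*)\,\nabla\!\cdot\!\bigl(A_*(E_*-E_k)\times B_0\bigr)\,dx,
\]
keeping $\|\gamma_{k+1/2}-\gamma_*\|_{L^2}$ undifferentiated, and instead bounds $\|\nabla\!\cdot\!(A_*(E_*-E_k)\times B_0)\|_{L^2}$ by $\|A_*(E_*-E_k)\times B_0\|_{H^1}$, which it then controls via \eqref{Efield difference bound1 in H1} and the inverse Poincar\'e inequality applied only to $\gamma_k-\gamma_*$. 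This is legitimate because both $\gamma_k$ and $\gamma_*$ lie in $\tilde S$.

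Your route, by contrast, integrates by parts and then needs $\|\nabla(\gamma_{k+1/2}-\gamma_*)\|_{L^2}\le \mathcal C\|\gamma_{k+1/2}-\gamma_*\|_{L^2}$. But $\gamma_{k+1/2}$ is the \emph{pre-projection} iterate; it is produced by the transport equation \eqref{Step 3} and has no reason to belong to $\tilde S$, so the constraint built into $S$ does not apply to it. You flag this as an ``obstacle,'' but it is not a detail to be filled in later: without membership in $\tilde S$ there is simply no mechanism available to control $\nabla\gamma_{k+1/2}$, and the estimate does not close. A second, related problem is your claim that the boundary term arising from this integration by parts vanishes ``by the same reasoning.'' In Case~2 the sign hypothesis on $\partial\Omega^+$ concerns $\tfrac{dA}{dt}E_k\times B_0\cdot\nu$, not $A(\gamma_*)(E_k-E_*)\times B_0\cdot\nu$; the latter has no prescribed sign, so this boundary term neither vanishes nor can be absorbed. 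Both issues disappear if you follow the paper and avoid the integration by parts on the right altogether.
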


\begin{theorem}\label{theorem CA2}
Suppose that the true conductivity $\gamma_* \in \Tilde{S}$. Let $A\in\mathcal{A'}$, and, additionally, assume that 
\begin{equation}
\frac{\partial A (\cdot, t)}{\partial t} \in C^1(\overline\Omega\times[\lambda^{-1}, \lambda], Sym)
\end{equation}
and there exists a constant $\Tilde{\mathcal{E}_2}$, $0< \Tilde{\mathcal{E}_2} < 1$ such that
\begin{equation}\label{Double derivative bound2}
    \underset{t \in [\lambda^{-1}, \lambda]}{\sup} \sup_{x\in\Omega}\quad \Big|\nabla \cdot \Big[ \Big(\frac{\partial A}{\partial t}(\cdot, t)\Big|_{t = c(\cdot)} - I \Big) E_k \times B_0 \Big] \Big| \leq \tilde{\mathcal{E}_2}. 
\end{equation}
Assume that either
 \begin{enumerate}
\item \[\gamma_k = \gamma_* , \qquad\textnormal{on}\quad\partial\Omega , \quad\textnormal{for\:any}\:k,\quad k\geq 1 ;\]
or
\item 
\begin{eqnarray*}
& & \gamma_k = \gamma_* ,\qquad\textnormal{on}\quad\partial\Omega^{-}\quad\textnormal{and}\quad \sup_{t\in[\lambda^{-1}, \lambda]}\frac{\partial A (\cdot, t)}{\partial t} E_k\times B_0\cdot\nu\geq 0,\qquad\textnormal{on}\quad \partial\Omega^{+},\\
& &\textnormal{for\:any}\:k,\quad k\geq 1.
\end{eqnarray*}
\end{enumerate}
Then, for an appropriate choice of $\mathcal{C}$, $\Lambda$, $C_2$, $\tilde{C_4}$ $\mathcal{E}_2$and $\tilde{\mathcal{E}_2}$, there exists a constant $C$ $0< C < 1$ such that
\begin{equation}
    || \gamma_{k+1} - \gamma_*||_{L^2(\Omega)} \leq C ||\gamma_k - \gamma_*||_{L^2(\Omega)}. \\ \\
\end{equation}
\end{theorem}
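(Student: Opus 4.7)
The plan is to mirror the proof of Theorem \ref{theorem 2}, applying the same splitting technique to the algorithmic relation \eqref{Step 3} instead of to a difference between two data sets. Concretely, I would start from
\[
\nabla\cdot\bigl(A(\cdot,\gamma_{k+1/2})E_k\times B_0\bigr) \;=\; \nabla\cdot\bigl(A(\cdot,\gamma_*)E_{A_*}\times B_0\bigr),
\]
rearrange to
\[
\nabla\cdot\bigl((A(\cdot,\gamma_{k+1/2})-A(\cdot,\gamma_*))E_k\times B_0\bigr) \;=\; \nabla\cdot\bigl(A(\cdot,\gamma_*)(E_{A_*}-E_k)\times B_0\bigr),
\]
and invoke the Lagrange theorem on $A(\cdot,\gamma_{k+1/2})-A(\cdot,\gamma_*)$ exactly as in the proof of Theorem \ref{theorem 2}. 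Writing $\frac{\partial A}{\partial t}\big|_{t=c(\cdot)} = I + (\frac{\partial A}{\partial t}\big|_{t=c(\cdot)}-I)$, the left-hand side decomposes into $I_1+I_2$, where $I_1 = \nabla\cdot((\gamma_{k+1/2}-\gamma_*)E_k\times B_0)$ and $I_2 = \nabla\cdot((\gamma_{k+1/2}-\gamma_*)(\frac{\partial A}{\partial t}\big|_{t=c(\cdot)}-I)E_k\times B_0)$, while the right-hand side plays the role of an $I_3$-type source term.

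I would then multiply by $(\gamma_{k+1/2}-\gamma_*)$ and integrate over $\Omega$. The $I_1$ contribution, via integration by parts and $\nabla\cdot(E_k\times B_0)=1$, yields $\tfrac{1}{2}||\gamma_{k+1/2}-\gamma_*||^2_{L^2(\Omega)}$ together with a boundary integral; this boundary integral vanishes outright under hypothesis (i), while under hypothesis (ii) it splits into a piece on $\partial\Omega^-$ (vanishing by the inflow condition \eqref{inflow 2}) and a piece on $\partial\Omega^+$ which is made non-negative by the sign assumption on $\frac{\partial A}{\partial t}E_k\times B_0\cdot\nu$ (translated through the Lagrange evaluation $t=c(x)$). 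The $I_2$ contribution is handled exactly as in \eqref{I2 bis}, except that the uniform bound \eqref{Double derivative bound2} replaces the cruder estimate used there, giving a bound of the form $\bigl(\tilde{\mathcal{E}}_2 + \mathcal{C} C_2 (\mathcal{E}_2 + 1)\bigr)||\gamma_{k+1/2}-\gamma_*||^2_{L^2(\Omega)}$. The right-hand side is absorbed by Cauchy--Schwarz and Proposition \ref{prop4}: using \eqref{E field difference bound2 in H1} together with the gradient-inequality embedded in $\tilde S$, I would bound it by $\mathcal{C}\Lambda\tilde{C}_4||\gamma_{k+1/2}-\gamma_*||_{L^2(\Omega)}||\gamma_k-\gamma_*||_{L^2(\Omega)}$.

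Collecting these estimates produces an inequality
\[
\Bigl(\tfrac{1}{2} - \mathcal{C}C_2(\tilde{\mathcal{E}}_2+\mathcal{E}_2+1)\Bigr)||\gamma_{k+1/2}-\gamma_*||^2_{L^2(\Omega)} \;\leq\; \mathcal{C}\Lambda\tilde{C}_4||\gamma_{k+1/2}-\gamma_*||_{L^2(\Omega)}||\gamma_k-\gamma_*||_{L^2(\Omega)},
\]
and choosing the parameters so that the coefficient on the left is a fixed positive fraction strictly larger than $\mathcal{C}\Lambda\tilde{C}_4$, one divides through to obtain $||\gamma_{k+1/2}-\gamma_*||_{L^2(\Omega)}\leq C||\gamma_k-\gamma_*||_{L^2(\Omega)}$ with $C<1$. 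To finish, I would invoke the fact that $T$ is the Hilbert projection onto the closed convex set $\tilde{S}\ni\gamma_*$, hence is $1$-Lipschitz, so $||\gamma_{k+1}-\gamma_*||_{L^2(\Omega)}=||T[\gamma_{k+1/2}]-T[\gamma_*]||_{L^2(\Omega)}\leq||\gamma_{k+1/2}-\gamma_*||_{L^2(\Omega)}$, which yields the claim. The main obstacle I anticipate is the case (ii) boundary analysis: the inflow set $\partial\Omega^-$ only controls $\gamma_{k+1/2}$ on part of the boundary, so one must verify carefully that the sign hypothesis on $\frac{\partial A}{\partial t}E_k\times B_0\cdot\nu$ on $\partial\Omega^+$ really renders the relevant outflow boundary integral non-negative when $\frac{\partial A}{\partial t}$ is evaluated at the Lagrange intermediate point $c(x)$; a secondary bookkeeping challenge is ensuring all the smallness conditions involving $\mathcal{C},\Lambda, C_2, \tilde{C}_4, \mathcal{E}_2, \tilde{\mathcal{E}}_2$ can be met simultaneously with $C<1$.
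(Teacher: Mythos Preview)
Your overall strategy is exactly the paper's: rearrange \eqref{Step 3} to $\nabla\cdot\bigl((A_{k+1/2}-A_*)E_k\times B_0\bigr)=\nabla\cdot\bigl(A_*(E_*-E_k)\times B_0\bigr)$, apply the Lagrange theorem, multiply by $(\gamma_{k+1/2}-\gamma_*)$ and integrate, decompose via $\frac{\partial A}{\partial t}=I+(\frac{\partial A}{\partial t}-I)$, treat boundary terms via hypotheses (i)/(ii), bound the right-hand side through \eqref{E field difference bound2 in H1} together with the gradient control on $\gamma_k-\gamma_*$ (both in $\tilde S$), and finish with the $1$-Lipschitz property of $T$.

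There is, however, one genuine issue in your handling of $I_2$. You write that it is ``handled exactly as in \eqref{I2 bis}'', but the argument in \eqref{I2 bis} reduces to $\int_\Omega(\gamma_1-\gamma_2)\nabla(\gamma_1-\gamma_2)\cdot W\,dx$ and then invokes the gradient inequality $\|\nabla(\gamma_1-\gamma_2)\|_{L^2}\le \mathcal C\|\gamma_1-\gamma_2\|_{L^2}$. In the present setting $\gamma_{k+1/2}$ has \emph{not} been projected into $\tilde S$, so there is no a priori bound of the form $\|\nabla(\gamma_{k+1/2}-\gamma_*)\|_{L^2}\le \mathcal C\|\gamma_{k+1/2}-\gamma_*\|_{L^2}$; your extra term $\mathcal C C_2(\mathcal E_2+1)$ in the $I_2$ estimate is therefore not justified. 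The paper avoids this by treating $I_1$ and $I_2$ \emph{together} via the identity
\[
\int_\Omega f\,\nabla\!\cdot(fV)\,dx \;=\; \tfrac12\int_{\partial\Omega}f^2\,V\!\cdot\!\nu\,ds \;+\; \tfrac12\int_\Omega f^2\,\nabla\!\cdot V\,dx,
\]
applied with $V=\frac{\partial A}{\partial t}\big|_{t=c}\,E_k\times B_0$, so that no gradient of $\gamma_{k+1/2}$ ever appears. The volume term is then bounded below by $\tfrac12(1-\tilde{\mathcal E}_2)\|\gamma_{k+1/2}-\gamma_*\|_{L^2}^2$ using hypothesis \eqref{Double derivative bound2} directly; this is precisely why that extra hypothesis (absent from Theorem~\ref{theorem 2}) is imposed here. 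Likewise, the boundary term naturally carries the full factor $\frac{\partial A}{\partial t}\big|_{t=c}E_k\times B_0\cdot\nu$, which is exactly what the sign assumption in case (ii) controls; splitting into separate $I_1$- and $I_2$-boundary pieces, as your description suggests, would leave each piece without a sign. With this correction the left-hand coefficient becomes $\tfrac12(1-\tilde{\mathcal E}_2)$, matching the paper, and the smallness condition is $\frac{(\tilde C_4+\mathcal C)(\Lambda+\mathcal E_2(1+K))}{1-\tilde{\mathcal E}_2}\le\tfrac12$.
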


\begin{proof}[Proof of Theorem \ref{theorem CA1}]
As $T$ projects into $\Tilde{S}$, which is convex, we have that  $|| \gamma_{k+1} - \gamma_*||_{L^2(\Omega)} \leq ||\gamma_{k+1/2} - \gamma_*||_{L^2(\Omega)}$. It is left to estimate $||\gamma_{k+1/2} - \gamma_*||_{L^2(\Omega)}$. \\

Subtracting $\nabla \cdot (A_* E_k \times B_0)$ from both sides of \eqref{Step 3} we get

\begin{equation}
    \nabla \cdot ((A_{k+1/2}-A_*) E_k \times B_0) = \nabla \cdot (A_* (E_* - E_k) \times B_0).
\end{equation}
Multiplying by $\gamma_{k+1/2}-\gamma_*$ and integrating over $\Omega$ yields

\begin{equation}
    \int_{\Omega} (\gamma_{k+1/2}-\gamma_*)\nabla \cdot ((A_{k+1/2}-A_*) E_k \times B_0) = \int_{\Omega} (\gamma_{k+1/2}-\gamma_*) \nabla \cdot (A_* (E_* - E_k) \times B_0).
\end{equation}
Using the property that 
\begin{equation}\label{A C1}
A\in C^1\left([\lambda^{-1}, \lambda],Sym\right),
\end{equation}
from the Lagrange Theorem, for every $x\in\Omega$, there exists $t(x)$, $0<t(x)<1$, such that
\begin{equation}
A(\gamma_{k+1/2}) -A(\gamma_*) = (\gamma_{k+1/2} - \gamma_*) \frac{d A}{dt}(t)\Big|_{t = c(x)},
\end{equation}
where $c(x) = \gamma_{k+1/2}(x)+t(x)\big(\gamma_*(x) - \gamma_{k+1/2}(x)\big)$, hence we get

\begin{equation}\label{eq 1}
\int_{\Omega} (\gamma_{k+1/2}-\gamma_*)\nabla \cdot ((\gamma_{k+1/2} - \gamma_*) \frac{d A}{dt}(t)\Big|_{t = c(\cdot)} E_k \times B_0)
= \int_{\Omega} (\gamma_{k+1/2}-\gamma_*) \nabla \cdot (A_* (E_* - E_k) \times B_0).
\end{equation}

The term on the left hand side of \eqref{eq 1} can be estimated from below as follows
\begin{equation}\label{eq 0}
    \begin{split}
&\int_{\Omega} (\gamma_{k+1/2}-\gamma_*)\nabla \cdot ((\gamma_{k+1/2} - \gamma_*) \frac{d A}{dt}(t)\Big|_{t = c(\cdot)} E_k \times B_0)dx \\
=&  \frac{1}{2}\int_{\partial\Omega}(\gamma_{k+1/2} - \gamma_*)^2 \cdot \Big(\frac{dA}{dt}(t)\Big|_{t = c(\cdot)}E_k \times B_0\Big)\cdot \nu ds(x) \\
&+ \frac{1}{2}\int_\Omega(\gamma_{k+1/2} - \gamma_*)^2\nabla \cdot \Big(\frac{d A}{dt}(t)\Big|_{t = c(\cdot)}E_k \times B_0\Big) dx,\\
\geq & \frac{1}{2}\int_{\partial\Omega}(\gamma_{k+1/2} - \gamma_*)^2 \cdot \Big(\frac{dA}{dt}(t)\Big|_{t = c(\cdot)}E_k \times B_0\Big)\cdot \nu ds(x) \\
&+ \frac{1}{2}||(\gamma_{k+1/2} - \gamma_*)||^2_{L^2(\Omega)}\\
&- \frac{1}{2}\int_\Omega|\gamma_{k+1/2} - \gamma_*|^2 \quad \underset{\Omega}{\sup} \Big| \nabla \cdot \Big(\Big(\frac{d A}{dt}(t)\Big|_{t = c(\cdot)} - I \Big)E_k \times B_0\Big)\Big| dx.
       \end{split}
 \end{equation} 
In the second equality in \eqref{eq 0} we performed integration by parts twice. In either cases $1.$ or $2.$, \eqref{eq 0}, combined with \eqref{Double derivative bound1}, leads to
\begin{equation}
\begin{split}
&\int_{\Omega} (\gamma_{k+1/2}-\gamma_*)\nabla \cdot ((\gamma_{k+1/2} - \gamma_*) \frac{d A}{dt}(t)\Big|_{t = c(\cdot)} E_k \times B_0)dx \\
& \geq  \frac{1}{2}(1-\tilde{\mathcal{E}_1}) ||(\gamma_{k+1/2} - \gamma_*)||^2_{L^2(\Omega)}. 
\end{split}
\end{equation}
The term on the right hand side of \eqref{eq 1} can be estimated from above as 
\begin{equation}
    \begin{split}
        &\Big|\int_{\Omega}
        (\gamma_{k+1/2}-\gamma_*) \nabla \cdot (A_* (E_* - E_k) \times B_0)\Big| \\
        \leq& ||\gamma_{k+1/2} - \gamma_*||_{L^2(\Omega)}||A_* (E_* - E_k) \times B_0||_{H^1(\Omega)}, \\
        \leq& (\tilde{C_3}+\mathcal{C})(\Lambda +\mathcal{E}_1 K)||\gamma_{k+1/2} - \gamma_*||_{L^2(\Omega)}||\gamma_* - \gamma_k||_{L^2(\Omega)},
    \end{split}
\end{equation}
where we combined estimates \eqref{Derivative assumption}, \eqref{sigma condition 2}, together with \eqref{ellipticity 1}, \eqref{Gradient sigma difference assumption} and \eqref{Efield difference bound1 in H1}. \\
Choosing $\frac{(\tilde{C_3}+ \mathcal{C})(\Lambda +\mathcal{E}_1 K)}{1- \tilde{\mathcal{E}_1}} \leq \frac{1}{2}$, we finally derive
 \begin{equation}
     ||\gamma_{k+1/2} - \gamma_*||_{L^2(\Omega)} \leq C||\gamma_{k} - \gamma_*||_{L^2(\Omega)},
 \end{equation}
 where $0<C<1$.
\end{proof}

\begin{proof}[Proof of Theorem \ref{theorem CA2}]
As this proof is very similar to the one above, we only point where the two proofs slightly differ. In particular, we take care to point out what are the \textit{a-priori} constants that come into play. As above, the goal is to estimate $||\gamma_{k+1/2} - \gamma_*||_{L^2(\Omega)}$. Subtracting $\nabla \cdot (A_* E_k \times B_0)$ from both sides of \eqref{Step 3} leads to
\begin{equation}
    \nabla \cdot ((A_{k+1/2}-A_*) E_k \times B_0) = \nabla \cdot (A_* (E_* - E_k) \times B_0).
\end{equation}
Multiplying by $\gamma_{k+1/2}-\gamma_*$ and integrating over $\Omega$ yields
\begin{equation}
    \int_{\Omega} (\gamma_{k+1/2}-\gamma_*)\nabla \cdot ((A_{k+1/2}-A_*) E_k \times B_0) = \int_{\Omega} (\gamma_{k+1/2}-\gamma_*) \nabla \cdot (A_* (E_* - E_k) \times B_0).
\end{equation}
Using the property that 
\begin{equation}\label{A C1 bis}
A\in C^1\left(\overline\Omega\times[\lambda^{-1}, \lambda],Sym\right),
\end{equation}
from the Lagrange Theorem, for every $x\in\Omega$, there exists $t(x)$, $0<t(x)<1$, such that
\begin{equation}
A(x, \gamma_{k+1/2}) -A(x, \gamma_*) = (\gamma_{k+1/2} - \gamma_*)(x) \frac{\partial A(x, t)}{\partial t}\Big |_{t = c(x)},
\end{equation}

where $c(x) = \gamma_{k+1/2}(x)+t(x)\big(\gamma_*(x) - \gamma_{k+1/2}(x)\big)$, hence we have

\begin{equation}\label{eq 1 bis}
\int_{\Omega} (\gamma_{k+1/2}-\gamma_*)\nabla \cdot ((\gamma_{k+1/2} - \gamma_*) \frac{\partial A(\cdot, t)}{\partial t}\Big |_{t = c(\cdot)} E_k \times B_0) = \int_{\Omega} (\gamma_{k+1/2}-\gamma_*) \nabla \cdot (A_* (E_* - E_k) \times B_0).
\end{equation}

Arguing as above, the left hand side of \eqref{eq 1 bis} can be estimated from below as 
\begin{equation}\label{eq 0 bis}
    \begin{split}
&\int_{\Omega} (\gamma_{k+1/2}-\gamma_*)\nabla \cdot ((\gamma_{k+1/2} - \gamma_*) \frac{\partial A(\cdot, t)}{\partial t}\Big |_{t = c(\cdot)} E_k \times B_0)dx \\
& \geq\frac{1}{2}\int_{\partial\Omega}(\gamma_{k+1/2} - \gamma_*)^2 \cdot \Big(\frac{\partial A(\cdot, t)}{\partial t}\Big |_{t = c(\cdot)}E_k \times B_0\Big)\cdot \nu ds(x) \\
&+ \frac{1}{2}||(\gamma_{k+1/2} - \gamma_*)||^2_{L^2(\Omega)}\\
&- \frac{1}{2}\int_\Omega|\gamma_{k+1/2} - \gamma_*|^2 \quad \underset{\Omega}{\sup} \Big| \nabla \cdot \Big(\Big(\frac{\partial A(\cdot, t)}{\partial t}\Big |_{t = c(\cdot)} - I \Big)E_k \times B_0\Big)\Big| dx.
 \end{split}
 \end{equation}
and again, in either case, $\textit{1.}$ or $\textit{2.}$, \eqref{eq 0 bis}, combined together with \eqref{Double derivative bound2}, leads to
 \begin{equation}
    \begin{split}
&\int_{\Omega} (\gamma_{k+1/2}-\gamma_*)\nabla \cdot ((\gamma_{k+1/2} - \gamma_*) \frac{\partial A(\cdot, t)}{\partial t}\Big |_{t = c(\cdot)} E_k \times B_0)dx \\
    &  \geq  \frac{1}{2}(1- \tilde{\mathcal{E}_2}) || (\gamma_{k+1/2} - \gamma_*)||^2_{L^2(\Omega)}.  \\
 \end{split}
 \end{equation}

The right hand side of \eqref{eq 1 bis} can be estimated from above as
\begin{equation}
    \begin{split}
        &\Big|\int_{\Omega}
        (\gamma_{k+1/2}-\gamma_*) \nabla \cdot (A_* (E_* - E_k) \times B_0)\Big| \\
         &     \leq (\tilde{C_4}+\mathcal{C})(\Lambda +\mathcal{E}_2(1+ K))||\gamma_{k+1/2} - \gamma_*||_{L^2(\Omega)}||\gamma_* - \gamma_k||_{L^2(\Omega)},
 \end{split}
 \end{equation}
by combining estimates \eqref{sigma condition 2} and \eqref{partial derivative assumption}, and the use of \eqref{ellipticity 2}, \eqref{Gradient sigma difference assumption} and \eqref{E field difference bound2 in H1}. \\ \\
Therefore, choosing $\frac{(\tilde{C_4}+\mathcal{C})(\Lambda +\mathcal{E}_2(1+ K))}{1- \tilde{\mathcal{E}_2}} \leq \frac{1}{2}$, we derive

 \begin{equation}
     ||\gamma_{k+1/2} - \gamma_*||_{L^2(\Omega)} \leq C ||\gamma_{k} - \gamma_*||_{L^2(\Omega)}.
 \end{equation}
\end{proof}


\section{Numerical Experiments}\label{numerics}

In this section we present several numerical experiments where we implement the algorithm described in section \ref{reconstruction}. In each case, a scalar reference medium $\gamma_* \in \tilde{S}$ (the true scalar function to be reconstructed) is chosen, along with a specific choice of a $3 \times 3$ matrix function $A$ belonging either to $\mathcal{A}$ or $\mathcal{A'}$. We start by considering a number of examples in which $A\in\mathcal{A}$ only (Examples 1 - 4). The more general case of $A\in\mathcal{A'}$ is exploited in Examples 5 and 6.\\

We then compute the synthetic acoustic source data
\begin{equation}\label{data} 
F(\gamma_*) = \nabla \cdot (A_{\gamma_*} E_* \times B_{0}), 
\end{equation}
where $E_*$ is the electric field solution to (\ref{Algorithm1}) corresponding to $\gamma_*$. For simplicity, in our examples we always choose reference conductivity $\gamma_{0} = 1$. We note that in the particular examples here, we found that the projection step of the algorithm (Step 4)  was not necessary for convergence of the iterates, and in two of the examples below, $\gamma_*$ is not in $C^1$ (Examples 4 and 6, where $\gamma_*$ is piecewise affine and piecewise constant, respectively). This is to test the reconstruction from non smooth data. Again, it will be part of future work to extend our theoretical framework of sections \ref{main assumptions} - \ref{reconstruction} to conductivities with lower regularity assumptions and a more general anisotropic structure.

To compute solutions to both  (\ref{Algorithm1}) and (\ref{Step 3}), we use the Python suite FEniCS.  As described in the proof of Proposition \ref{prop4}, we replace (\ref{Algorithm1}) with the Neumann problem (\ref{1.15}) and solve numerically for $u_{k+1}=u$ using a standard variational formulation and piecewise linear finite elements.  Recall then that $E_{k+1}$ is given by $$E_{k + 1} = \nabla u_{k + 1} + \tilde{E},$$ where $\tilde{E} = 0.5[-y, x, 0]^{T}$.  We note that while (\ref{1.15}) is always a linear conductivity equation, (\ref{Step 3}) changes more drastically for different choices of $A$.  To compute solutions to this generally nonlinear transport equation, we used two different approaches; a discontinuous Galekin method and a built-in FEniCS nonlinear solver.  We give more details about these in the first two examples.


\begin{figure}[htbp]
\centering
\begin{minipage}{.5\textwidth}
\centering
  \includegraphics[scale = 0.55]{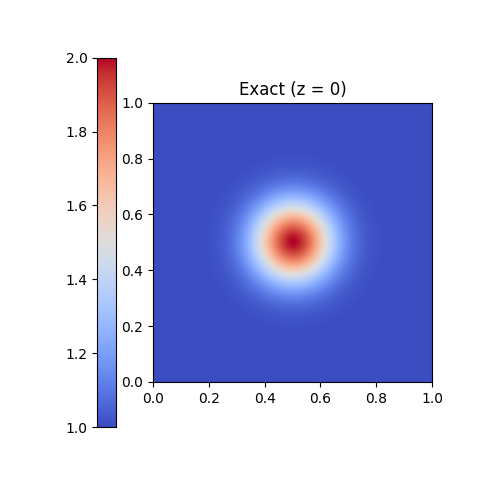}
  \caption{Reference conductivity $\gamma_*$ at $z = 0$}
\end{minipage}%
\begin{minipage}{.5\textwidth}
  \centering
  \includegraphics[scale = 0.55]{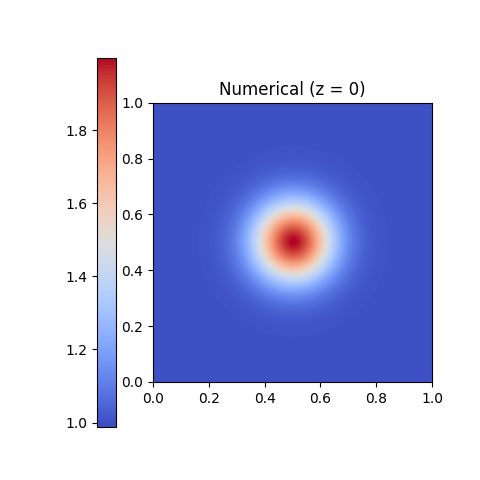}
  \caption{Reconstructed conductivity at $z = 0$}
\end{minipage}
\caption{ $A$ given by (\ref{D1}). The resulting transport equation is linear.  Ran for 10 iterations on a mesh of size 150 x 150 x 1.}
\label{figexample1}
\end{figure}

{\bf Example 1.} For our first example we choose $A$ to be
\begin{equation}\label{D1}
  A(\gamma)\ \ =\ \ \begin{bmatrix}
  \gamma & 0 & 0\\
  0 & \gamma & 0\\
  0 & 0 & 1
  \end{bmatrix}
  \end{equation}
and reference medium $\gamma_*$ to be the Gaussian curve independent of $z$:  $$
  \gamma_* = \exp{\left( -\frac{(x - 0.5)^{2}}{0.02} - \frac{(y - 0.5)^{2}}{0.02} \right)} + 1.$$  In this case the transport equation \eqref{Step 3} for $\gamma_{k + 1}$ becomes
\begin{equation*}
  \begin{cases}
    \nabla \cdot (\gamma_{k + 1}(\EB{k})) = \nabla \cdot (\gamma_*(E_* \times B_{0})), & \text{in } \Omega\\
    \gamma_{k + 1} = \gamma_{*} ,\hfill & \text{ on } \partial \Omega^{-},
  \end{cases}
\end{equation*}
which is 2D linear isotropic, as was studied previously in \cite{A-Q-S-Z}. To solve this linear transport equation, we use a discontinuous Galerkin (DG) method with upwinding, which we now describe briefly. Given a mesh $T_{h}$ of $\Omega$, we let $V$ be a finite dimensional space containing functions not necessarily continuous across mesh elements. We define the `\textit{upwinding}' term on the boundary of a triangle by  $$\bar{E}_{k} = 0.5((E_{k} \times B_{0}) \cdot \vec{n} + |(E_{k} \times B_{0}) \cdot \vec{n}|)$$  and denote the jump of a function on an internal edge $E$ by $$[[f]] = f^{+} - f^{-},$$ where $f^{+}$ ($f^{-}$) is its restriction to the edge from the outflow (inflow)  direction, respectively. Then the variational formulation for the numerical transport problem is: \\

find $\gamma_{k+1}= \gamma \in V$ such that, for all $v \in V$,
\begin{equation*}
  \int_{\partial \Omega}v\gamma (E_{k} \times B_{0})\cdot \vec{n}\, ds + \sum_{E}\int_{E} [[v]][[\gamma \bar{E}_{k}]]\, dS - \int_{\Omega} \nabla v \cdot \gamma (E_{k} \times B_{0})\, dx = \int_{\Omega} F(\gamma^*) v\, dx,
\end{equation*}
where $F(\gamma^*)$ is the source data (\ref{data}).
Note that the upwinding term provides numerical stability. We refer to \cite{B-S} for more details on DG methods and upwinding. Figure \ref{figexample1} shows the true $\gamma_*$, plotted alongside its numerical reconstruction using the above algorithm.

\begin{figure}[htbp]
 \centering
  \begin{minipage}{.5\textwidth}
   \centering
   \includegraphics[scale = 0.55]{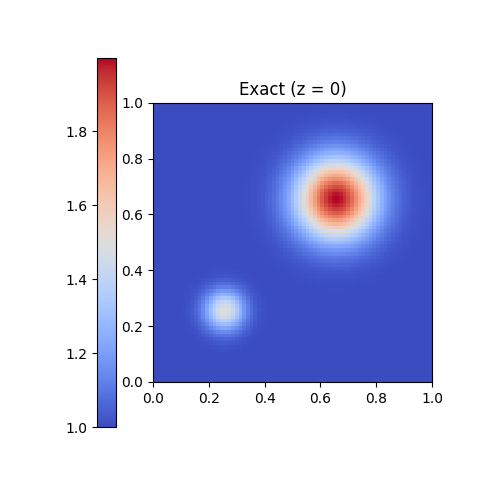}
    \caption{Reference conductivity $\gamma_*$ at $z = 0$}
  \end{minipage}%
  \begin{minipage}{.5\textwidth}
    \centering
   \includegraphics[scale = 0.55]{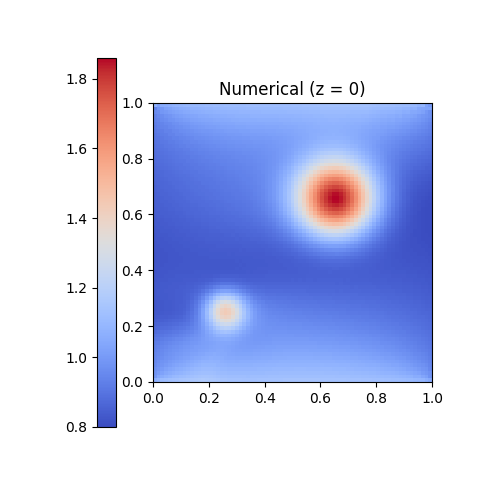}
    \caption{Reconstructed conductivity  at $z = 0$}
  \end{minipage}
  \caption{ $A$ given by (\ref{D2}), resulting in transport equation (\ref{transport2}). Ran for 10 iterations on a mesh of size 75 x 75 x 10.}
\label{figexample2}
\end{figure}

\vspace{0.5cm}

{\bf Example 2.} In this next example we add nonlinearity in the dependence of A on $\gamma$. To solve the transport equation \eqref{Step 3} in this and all of the examples that follow, instead of DG, we use the Fenics nonlinear solver with higher order Lagrange elements. We choose here
\begin{equation} \label{D2}
  A(\gamma)\ \ =\ \ \begin{bmatrix}
  0.4(\gamma + 1)^{2} & 0.01 & 0\\
  0.01 & 3\gamma & 0\\
  0 & 0 & \gamma
  \end{bmatrix}
\end{equation}
and reference conductivity $\gamma_*$ to be a sum of Gaussian curves given by
$$
  \gamma_* = \exp{\left( -\frac{(x - 0.65)^{2}}{0.02} - \frac{(y - 0.65)^{2}}{0.02} \right)} + 0.5 * \exp{\left( -\frac{(x - 0.25)^{2}}{0.05} - \frac{(y - 0.25)^{2}}{0.05} \right)}.
$$
With the definitions above, the transport problem \eqref{Step 3} becomes the nonlinear equation for $\gamma = \gamma_{k + 1}$:
\begin{equation}\label{transport2}
      a_{1}\gamma^{2} + a_{2}\gamma + a_{3}\gamma\gamma_{x} +a_{4}\gamma_{x} - a_{5}\gamma_{y} + c = F(\gamma_{*}),\qquad \text{ in } \quad\Omega,
      \end{equation}
with inflow condition $\gamma = \gamma_{*}$ on $\partial \Omega^{-}$,
where the spatially varying coefficients are functions of $E = [E_{1}, E_{2}, E_{3}]^{T}$, given by
 $a_{1} = 0.4E_{2, x}$,  $a_{2} = 0.8E_{2, x} - 3E_{1, y}$, $a_{3}= a_4 = 0.8E_{2} $,  $a_{5}= 3E_{1}$ and $c = 0.4E_{2, x}$. We show the reconstruction in Figure \ref{figexample2}.

\begin{figure}[htbp]
 \centering
  \begin{minipage}{.5\textwidth}
    \centering
    \includegraphics[scale = 0.55]{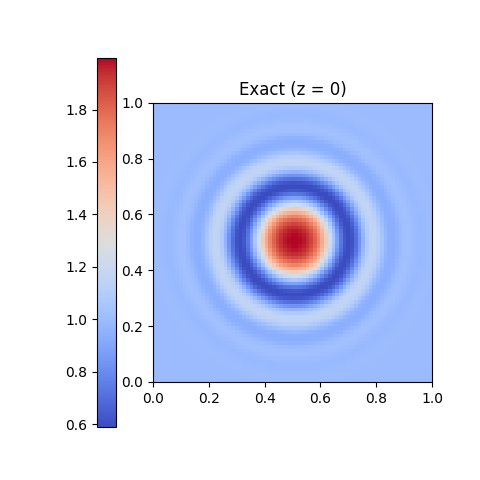}
    \caption{Reference conductivity $\gamma_*$ at $z = 0$}
 \end{minipage}%
  \begin{minipage}{.5\textwidth}
    \centering
    \includegraphics[scale = 0.55]{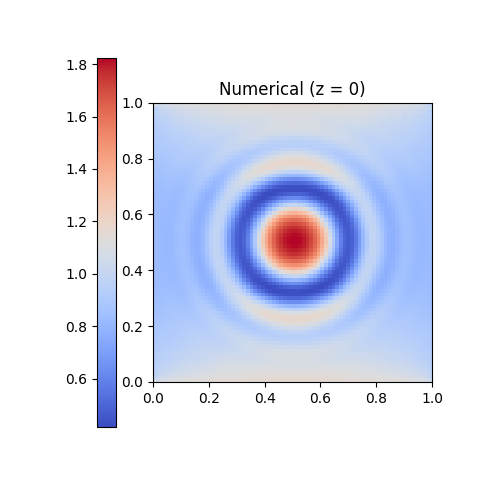}
   \caption{Reconstructed conductivity at $z = 0$}
  \end{minipage}
  \caption{A given by (\ref{D3}), yielding transport equation (\ref{transport3}). Ran for 10 iterations on a mesh of size 75 x 75 x 10.}
  \label{figexample3}
\end{figure}

\vspace{0.5cm}

{\bf Example 3.} In this next example we add nonlinearity on the off diagonals. Let
\begin{equation} \label{D3}
  A(\gamma)\ \ =\ \ \begin{bmatrix}
  0.4(\gamma + 1)^{2} & 0.01\gamma  (1 - \gamma) & 0\\
  0.01\gamma  (1 - \gamma) & 3\gamma & 0\\
  0 & 0 & \gamma
  \end{bmatrix}
\end{equation}
and let $\gamma_*$ be given by
$$
  \gamma_*(x, y) = \cos{(75(x - 0.5)^{2} + 75(y - 0.5)^{2})}\exp{\left( -\frac{(x - 0.5)^{2}}{2} - \frac{(y - 0.5)^{2}}{2} \right)} + 1.
$$
In this case the transport equation \eqref{Step 3} for $\gamma_{k+1}=\gamma$ becomes:
\begin{equation}\label{transport3}
    a_{1}\gamma^{2} + a_{2}\gamma\gamma_{y} + a_{3}\gamma_{y} + a_{4}\gamma +a_{5}\gamma_{x} + a_{6}\gamma\gamma_{x} + c = F(\gamma_{*}),\qquad  \text{in}\quad\Omega ,\end{equation}
  with    $\gamma = \gamma_{*}$ on $ \partial \Omega^{-}$, where the coefficients are given by
  $a_{1} = 0.4E_{2, x} + 0.01E_{1, x} - 0.01E_{2, y}$,  $a_{2} = 0.898E_{2}$,
   $a_{3} = 0.01E_{2} - 3E_{1}$,
    $a_{4}= 0.8E_{2, x} - 0.01E_{1, x} + 0.01E_{2, y} - 3E_{1, y}$,
  $a_{5} = 0.8E_{2} - 0.01E_{1}$,
 $a_{6} = 0.02E_{1}$, and
 $c= 0.4E_{2, x}$.
Reconstruction results are in Figure \ref{figexample3}.

\begin{figure}[htbp]
\centering
  \begin{minipage}{.5\textwidth}
   \centering
    \includegraphics[scale = 0.55]{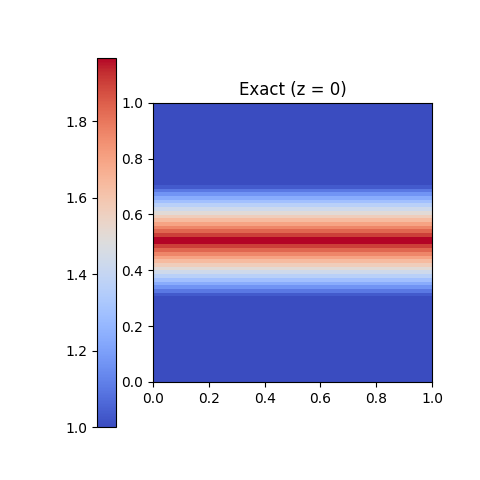}
    \caption{Reference conductivity $\gamma_*$ at $z = 0$}
  \end{minipage}%
  \begin{minipage}{.5\textwidth}
    \centering
    \includegraphics[scale = 0.55]{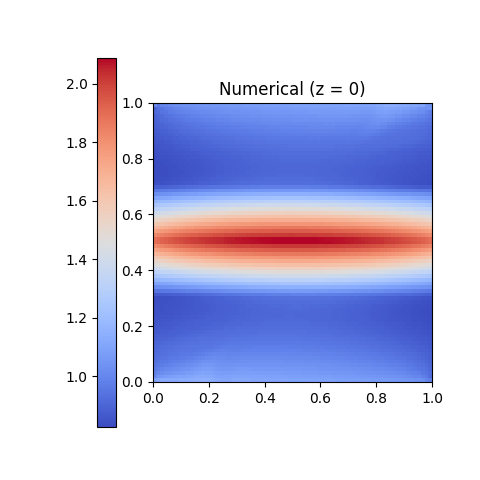}
    \caption{Reconstructed conductivity at $z = 0$}
 \end{minipage}
  \caption{A given by (\ref{D4}). Ran for 10 iterations on a mesh of size 75 x 75 x 10.}
\label{figexample4}
\end{figure}

\vspace{0.5cm}

{\bf Example 4.}
Here we choose a piecewise affine $\gamma_*$ to test the reconstruction of non smooth conductivity. Let
\begin{equation}\label{D4}
  A(\gamma)\ \ =\ \ \begin{bmatrix}
  0.4(\gamma + 1)^{2} & \frac{1}{\gamma + 20} & 0\\
  \frac{1}{\gamma + 20} & 3\gamma & 0\\
  0 & 0 & \gamma
  \end{bmatrix}
\end{equation}
and define  $\gamma_*$ by
$$
  \gamma_*(x, y) = \begin{cases}
  1 + 5(x - 0.3) & ,\ \ \ 0.3 \leq x \leq 0.5\\
  2 - 5(x - 0.5) & ,\ \ \ 0.5 \leq x \leq 0.7\\
  1 & ,\ \ \ \text{else}
  \end{cases}
$$
With the definitions above, the transport problem (\ref{Step 3}) becomes: find $\gamma = \gamma_{k + 1}$ such that
\begin{multline}
    (E_{2,x} - E_{1,y})\gamma
    + (E_{2})\gamma_{x}
    - \left( \frac{E_{2}}{(\gamma + 20)^{2}} + E_{1} \right)\gamma_{y} \\
    +\left( \frac{1}{\gamma + 20} \right)\left( E_{2,y} - E_{1,x} + \frac{E_{1,x}}{\gamma + 20} \right) = F(\gamma^*),\qquad\text{in}\quad\Omega ,
    \end{multline}
 with    $\gamma = \gamma_{*}$ on $ \partial \Omega^{-}$. Numerical reconstructions are show in Figure \ref{figexample4}.

\begin{figure}[htbp]
 \centering
 \begin{minipage}{.5\textwidth}
    \centering
   \includegraphics[scale = 0.55]{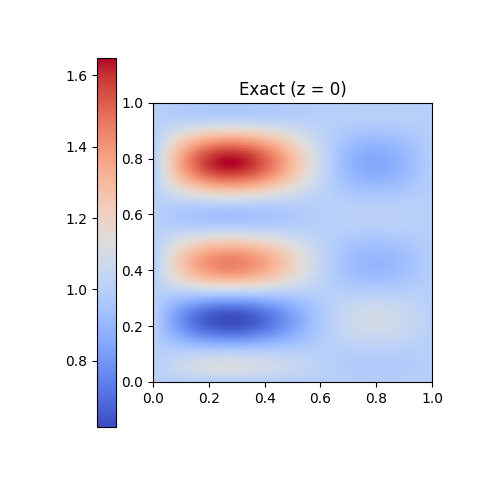}
    \caption{Reference conductivity $\gamma_*$ at $z = 0$}
  \end{minipage}%
  \begin{minipage}{.5\textwidth}
    \centering
    \includegraphics[scale = 0.55]{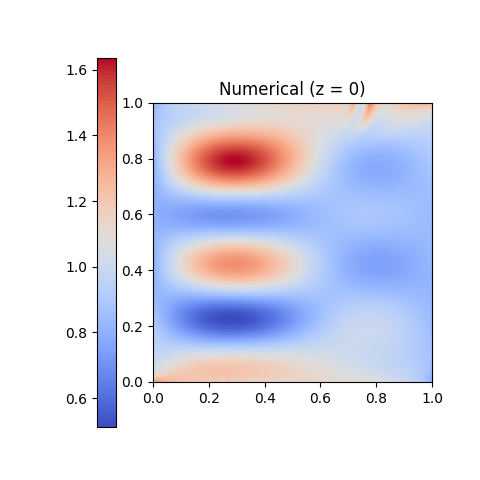}
    \caption{Reconstructed conductivity at $z = 0$}
  \end{minipage}
  \caption{A given by (\ref{D5}). Ran for 10 iterations on a mesh of size 100 x 100 x 10.}
  \label{figexample5}
\end{figure}

\vspace{0.5cm}

{\bf Example 5.}
Next we choose $A$ to be spatially varying as well as dependent on $\gamma$, as follows,
\begin{equation}\label{D5}
  A(\vec{x}, \gamma)\ \ =\ \ \begin{bmatrix}
  0.4(\gamma + 1)^{2} & 0.25(x^{2} + y^{2})\gamma& 0\\
  0.25(x^{2} + y^{2})\gamma & 3\gamma & 0\\
  0 & 0 & \gamma
  \end{bmatrix}
\end{equation}
and we let $\gamma_*$ be the trigonometric function
$$
  \gamma_*(x, y) = \sin(10x)\sin(5y)\sin(7(1 - x))\sin(y - 1) + 1.
$$
Numerical results are given in Figure \ref{figexample5}.

\vspace{0.5cm}

{\bf Example 6.}
Finally, we present an example in which $A$ is spatially varying as well as dependent on $\gamma$ and $\gamma_*$ is z-dependent . We let

\begin{equation} \label{D6}
  A(\vec{x}, \gamma)\ \ =\ \ \begin{bmatrix}
  0.4(\gamma + 1)^{2} & 0.25((x - 0.5)^{2} + (y - 0.5)^{2})\gamma& 0\\
  0.25((x - 0.5)^{2} + (y - 0.5)^{2})\gamma & 3\gamma & 0\\
  0 & 0 & \gamma
  \end{bmatrix}
\end{equation}
 and we let $\gamma_*$ be a piecewise constant corresponding to a spherical inclusion, that is,
 $$
  \gamma_*(x, y, z) = \begin{cases}
  2\ , & (x - 0.5)^{2} + (y - 0.5)^{2} + (z - 0.5)^{2} \leq 0.4\\
  1\ , & \text{else}
  \end{cases}.
$$
Slices of the three dimensional reconstruction are shown in Figures \ref{fig6-1} - \ref{fig6-5}.

\begin{figure}[htbp]
  \centering
 \begin{minipage}{.5\textwidth}
    \centering
    \includegraphics[scale = 0.5]{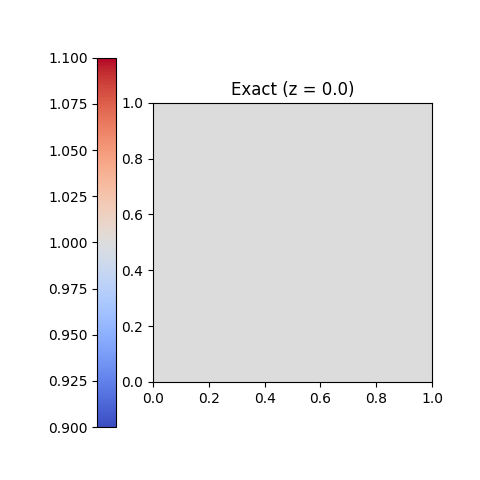}
  \end{minipage}%
  \begin{minipage}{.5\textwidth}
    \centering
    \includegraphics[scale = 0.5]{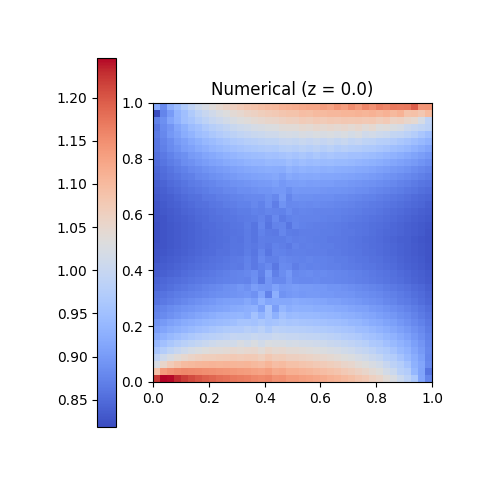}
  \end{minipage}
   \caption{Reconstruction for (\ref{D6}) at $z=0$.}
  \label{fig6-1}
\end{figure}

\begin{figure}[h!]
  \centering
  \begin{minipage}{.5\textwidth}
    \centering
    \includegraphics[scale = 0.5]{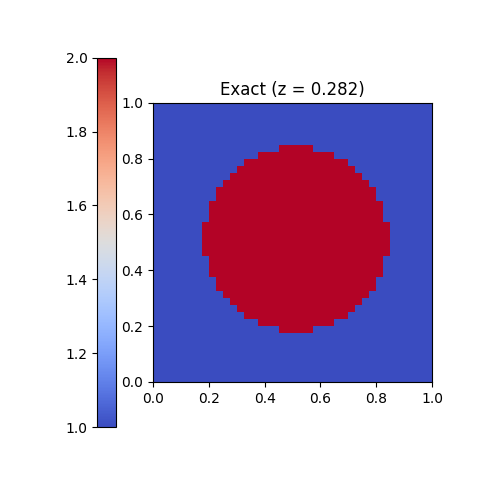}
  \end{minipage}%
  \begin{minipage}{.5\textwidth}
  \centering
   \includegraphics[scale = 0.5]{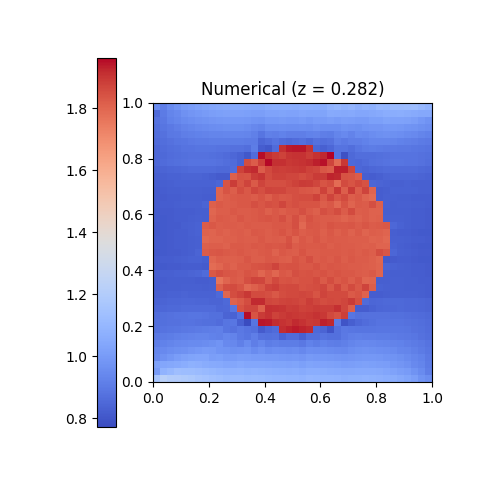}
  \end{minipage}
   \caption{Reconstruction for (\ref{D6}) at $z=0.282$.}
  \label{fig6-2}
\end{figure}

\begin{figure}[htbp]
 \centering
  \begin{minipage}{.5\textwidth}
    \centering
    \includegraphics[scale = 0.5]{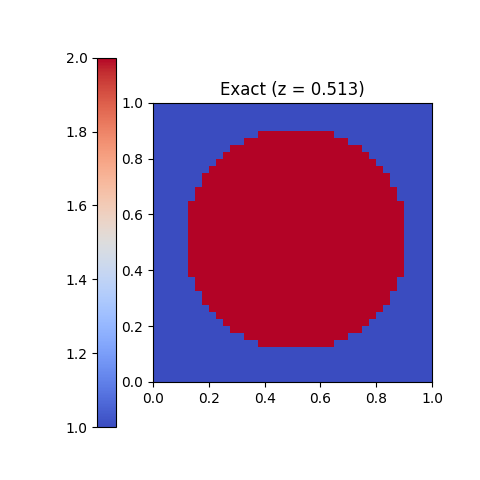}
 \end{minipage}%
  \begin{minipage}{.5\textwidth}
    \centering
    \includegraphics[scale = 0.5]{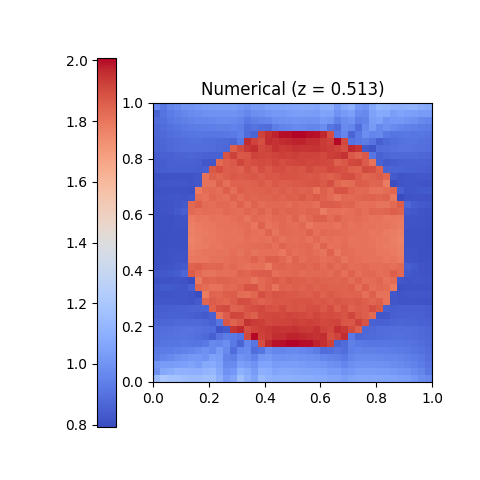}
  \end{minipage}
   \caption{Reconstruction for (\ref{D6}) at $z=0.513$.}
  \label{fig6-3}
  \end{figure}

  \begin{figure}[htbp]
 \centering
  \begin{minipage}{.5\textwidth}
    \centering
    \includegraphics[scale = 0.5]{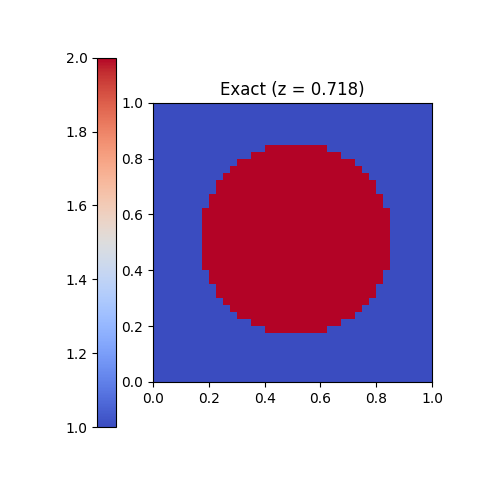}
  \end{minipage}%
    \begin{minipage}{.5\textwidth}
    \centering
    \includegraphics[scale = 0.5]{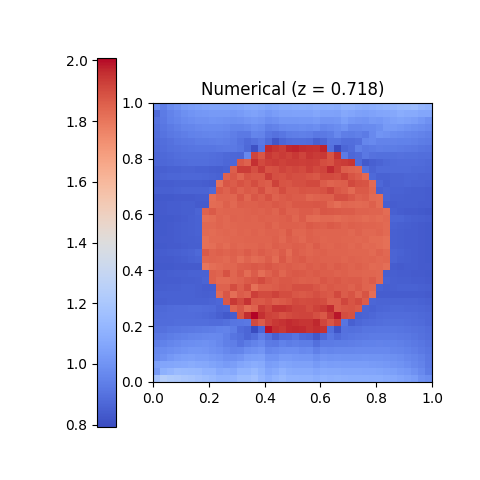}
  \end{minipage}
 \caption{Reconstruction for (\ref{D6}) at $z=0.718$.}
  \label{fig6-4}
\end{figure}

\begin{figure}[htbp]
  \centering
  \begin{minipage}{.5\textwidth}
    \centering
    \includegraphics[scale = 0.5]{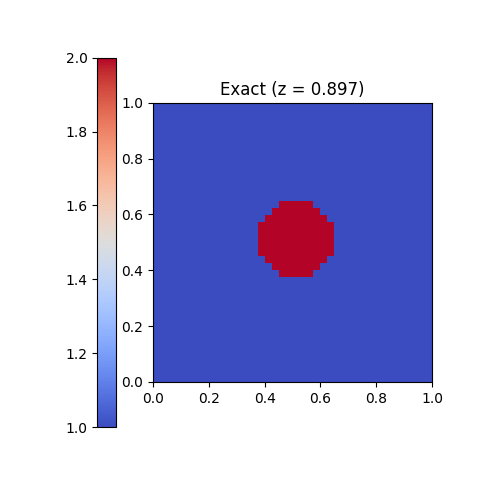}
  \end{minipage}%
  \begin{minipage}{.5\textwidth}
    \centering
    \includegraphics[scale = 0.5]{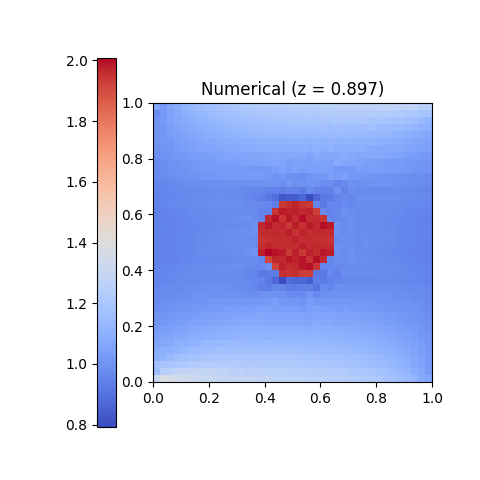}
  \end{minipage}
  \centering
  \caption{Reconstruction for $A$ given by the $z$-dependent (\ref{D6}) for   $z = 0.897$. Ran for 10 iterations on a mesh of size 40 x 40 x 40.}
  \label{fig6-5}
\end{figure}
\clearpage




\section{Conclusions}\label{conclusions}
In this work we studied issues of stability and reconstruction of the anisotropic conductivity $\sigma$ of a biological medium $\Omega\subset\mathbb{R}^3$  by the hybrid inverse problem of Magneto-Acoustic Tomography with Magnetic Induction MAT-MI. More specifically, we considered a class of conductivities which correspond to a one-parameter family of symmetric and uniformly positive matrix-valued functions $t\mapsto A(x, t)$, which are \textit{a-priori} known to depend nonlinearly on $t\in[\lambda^{-1}, \lambda]$. This gives rise to the family of anisotropic conductivities $A(x,\gamma(x))$, $x\in\Omega$, for which the goal is to stably reconstruct the scalar function $\gamma$ in $\Omega$. We showed that if $A$ belongs to certain classes of admissible anisotropic structures, then a Lipschitz type stability estimate of the scalar function $\gamma$ from the data given by and internal functional, holds true. In particular, the argument for our theoretical framework requires that $\gamma$ and $A$ belong to $C^{1,\beta}$. Our stability estimates extend the results in \cite{A-Q-S-Z} to the case where $\sigma$ depends nonlinearly on $\gamma$, hence allowing us to consider a more realistic type of anisotropic structures. Furthermore, we showed that the convergence of the reconstruction algorithm introduced in \cite{A-Q-S-Z} extends to this nonlinear case, and demonstrated its effectiveness in several numerical experiments.

Several questions remain, including, as mentioned in \cite{A-Q-S-Z}, the reconstruction of full anisotropy, which we expect will require more measurements at hand. It will also be interesting to investigate more precisely to what extent the regularity assumptions considered in this paper are needed for the convergence of the reconstruction algorithm in practice; and the related question of when one needs to actually regularize the iterates by invoking the projections into the convex set $\Tilde{S}$. These questions are the subject of future work.

\section*{Acknowledgments}
The research conducted by N. Donlon and R. Gaburro  in this publication was funded by the Irish Research Council under the Grant number: GOIPG/2021/527. R. Gaburro was partially supported by Science Foundation Ireland under Grant number 16/RC/3918. S. Moskow was partially supported by NSF grant DMS-2008441.


\newpage

\end{document}